  \def\Quote #1\par #2\par #3\par
  \def\mtx#1#2{\renewcommand{\arraystretch}{1.2}%
      \left(\! \begin{array}{#1}#2\end{array}\! \right)}
\def\nextline{$ $\newline}
\def\definedas{\defined}
\def\T{^T\!}
\def\defined{\buildrel{\scriptscriptstyle\triangle}\over=}
\def\m{\phantom-}
\def\Re{\mathbb{R}}
\def\minimize#1{{\displaystyle\minim_{#1}}}
\def\minim{\mathop{\hbox{\rm minimize}}}
\def\rank{\mathop{\hbox{\rm rank}}}
\def\re{\mathop{\hbox{\rm re}}}
\def\half  {{\textstyle\frac12}}
\def\third {{\textstyle\frac13}}
\def\twothirds{{\textstyle\frac23}}
\def\inv{^{\raise 0.9pt\hbox{$\scriptscriptstyle -$}1}} 
\def\spose#1{\hbox to 0pt{#1\hss}}
 \def\text #1{\hbox{\quad#1\quad}}
\def\nthinsp{\mskip -2   mu}
\def\subplus {_{\scriptscriptstyle +}}
\def\kp#1{_{k{\raise 0.7pt\hbox{$\scriptscriptstyle +$}}#1}} 
\def\km#1{_{k{\raise 0.7pt\hbox{$\scriptscriptstyle -$}}#1}} 
\def\jp#1{_{j{\raise 0.7pt\hbox{$\scriptscriptstyle +$}}#1}} 
\def\jm#1{_{j{\raise 0.7pt\hbox{$\scriptscriptstyle -$}}#1}} 
\def\ip#1{_{i{\raise 0.7pt\hbox{$\scriptscriptstyle +$}}#1}} 
\def\im#1{_{i{\raise 0.7pt\hbox{$\scriptscriptstyle -$}}#1}} 
\def\superstar{^{\raise 0.5pt\hbox{$\nthinsp *$}}}
\def\SUPERSTAR{^{\raise 0.5pt\hbox{$*$}}}
\def\lamstar  {\lambda\superstar}
\def\lamstarT {\lambda^{\raise 0.5pt\hbox{$\nthinsp\ast$}T}}
\def\pstar{p\superstar}
\def\vstar{v\superstar}
\def\xstar{x\superstar}
\def\lamhat{\widehat\lambda}
\def\lamtilde{\widetilde\lambda}
 \def\varGamma{{\mathit\Gamma}}
 \def\varLambda{{\mathit\Lambda}}
\def\Lambdait{\varLambda}
\def\Ascr{{\mathcal A}}
\def\Dscr{{\mathcal D}}
\def\Iscr{{\mathcal I}}
\def\Jscr{{\mathcal J}}
\def\Vscr{{\mathcal V}}
\def\Wscr{{\mathcal W}}
\def\Atilde{\widetilde A}
\def\Dtilde{\widetilde D}
\def\hbar{\skew{4.2}\bar h}
\def\xbar{\skew{2.8}\bar x}
\def\xhat{\skew{2.8}\widehat x}
\def\xtilde{\skew3\widetilde x}
 \def\sigmahat{\widehat\sigma}
 \def\ceil(#1){\lceil#1\rceil}    
 \def\floor(#1){\lfloor#1\rfloor}
 \def\re(#1,#2){\mathop{\mbox{re}}(#1,#2)}    
 \def\ae(#1,#2){\mathop{\mbox{ae}}(#1,#2)}
 \def\thinmtx#1#2{\left(\! \begin{array}{#1}#2\end{array}\! \right)}
 \def\Iscr{{\cal I}}
 \def\Escr{{\cal E}}
 \def\pstarT{p^{\raise 0.2pt\hbox{$\scriptstyle\nthinsp\ast$}T}}
 \def\qstarT{q^{\raise 0.2pt\hbox{$\scriptstyle\nthinsp\ast$}T}}
 \def\sstarT{s^{\raise 0.2pt\hbox{$\scriptstyle\nthinsp\ast$}T}}
 \def\xstarT{x^{\raise 0.5pt\hbox{$\nthinsp *$}T}}
\def\vv#1{\verb|#1|}
\def\m{\phantom-}
\def\AstarT{A^{{\raise 0.5pt\hbox{$*$}}T}}
\def\JstarT{J^{{\raise 0.5pt\hbox{$*$}}T}}
\def\lamstarT{\lambda^{{\raise 0.5pt\hbox{$\nthinsp *$}}T}}
\def\Lamstar{\Lambdait^{{\raise 0.5pt\hbox{$\nthinsp *$}}}}
\def\Lamhatstar{{\widehat\Lambdait}^{{\raise 0.5pt\hbox{$\nthinsp *$}}}}
\def\lamhatstar{{\widehat\lambda}^{{\raise 0.5pt\hbox{$\nthinsp *$}}}}
\def\lamstarT{\lambda^{{\raise 0.5pt\hbox{$\nthinsp *$}}T}}
\def\lamstaraT{\lambda_a^{\raise 0.5pt\hbox{$\nthinsp *$}T}}
\def\lamstari{\lambda_i^{{\raise 0.5pt\hbox{$\nthinsp *$}}}}
\def\astari{a_i^{{\raise 0.5pt\hbox{$\nthinsp *$}}}}
\def\cstari{c_i^{{\raise 0.5pt\hbox{$\nthinsp *$}}}}
\def\xstari{x_i^{{\raise 0.5pt\hbox{$\nthinsp *$}}}}
\def\xstarT{x^{{\raise 0.5pt\hbox{$\nthinsp *$}}T}}
\def\zstari{z_i^{{\raise 0.5pt\hbox{$\nthinsp *$}}}}
\def\zstarT{z^{{\raise 0.5pt\hbox{$\nthinsp *$}}T}}
\def\Hstari{H_i^{{\raise 0.5pt\hbox{$*$}}}}
\def\+{\subplus}
\def\minimize#1{{\displaystyle\minim_{#1}}}
\def\Aact{\overset{\;\,=}{\vphantom{a}\smash{A}}}
\def\bact{\overset{\,=}{\vphantom{a}\smash{b}}}
\def\lamact{\overset{=}{\vphantom{a}\smash{\lambda}}}
\def\lamactT{{\vphantom{\rule{1pt}{5.5pt}}\smash{\overset{=}{\vphantom{a}\smash{\lambda}}}}^T}
\def\eacteps{\overset{\;=}{\rule{0pt}{0.7ex}\smash{e}}_{\!\epsilon}}
\def\Wscrhat{{\widehat\Wscr}}
\def\Wscrtilde{{\widetilde\Wscr}}
\def\lamhat{{\widehat\lambda}}
\def\subwscrhat{_{\scriptscriptstyle{\Wscrhat}}}
\def\subwscr{_{\scriptscriptstyle\Wscr}}
\def\subwscrone{_{\scriptscriptstyle{\Wscr_1}}}
\def\subwscrtwo{_{\scriptscriptstyle{\Wscr_2}}}
\def\subwscrthree{_{\scriptscriptstyle{\Wscr_3}}}
\def\subwscreps{_{\scriptscriptstyle{\Wscr_{\epsilon}}}}
\def\subwscrhat{_{\scriptscriptstyle{\Wscrhat}}}
\def\subyscreps{_{\scriptscriptstyle{\Yscr_{\epsilon}}}}
\def\supermorespacestar{^{\raise 0.5pt\hbox{$*$}}}
\def\Working{W}
\def\Workinghat{{\widehat\Working}}
\def\WorkingstarT
\def\WorkingstarplusT
\def\lamstarT{\lambda^{{\raise 0.5pt\hbox{$\nthinsp *$}}T}}
\def\lamstari{\lambda_i^{{\raise 0.5pt\hbox{$\nthinsp *$}}}}
\def\lamtilde{{\widetilde\lambda}}
\def\lamstarplus
\def\lamstarwscr
\def\shortmtx#1#2{\renewcommand{\arraystretch}{0.9}%
      \left( \begin{array}{#1}#2\end{array} \right)}
\def\lamactT{{\vphantom{\rule{1pt}{5.5pt}}\smash{\overset{\;\,=}{\vphantom{a}\smash{\lambda}}}}^T}
\def\Yscr{{\cal Y}}
\def\lamstarwscrT{\lambda\subwscr^{{\raise 0.5pt\hbox{$\nthinsp *$}}T}}
\def\Iscr{{\cal I}}
\def\Escr{{\cal E}}
\def\Aeq{A\subescr}
\def\beq{b\subescr}
\def\meq{m\subescr}
\def\Aineq{A\subiscr}
\def\bineq{b\subiscr}
\def\mineq{m\subiscr}
\def\subescr{_{\scriptstyle\Escr}}
\def\subiscr{_{\scriptstyle\Iscr}}
\def\subiscrback{_{\!\scriptstyle\Iscr}}
\def\Iscrinact{\overset{{\scriptscriptstyle\;\,>}}{\vphantom{b}\smash{\Iscr}}}
\def\Iscract{\overset{\;\,=}{{\rule{0pt}{1.2ex}}\smash{\Iscr}}}
\def\lamescrT{\lambda_{\scriptstyle{\Escr}}^T}
\def\lamiscrT{\lambda_{\scriptstyle{\Iscr}}^T}
\def\lamstariscr{\lambda\subiscr^{{\raise 0.5pt\hbox{$\nthinsp *$}}}}
\def\lamstariscrT{\lambda\subiscr^{{\raise 0.5pt\hbox{$\nthinsp *$}}T}}
\def\lamstarescr{\lambda\subescr^{{\raise 0.5pt\hbox{$\nthinsp *$}}}}
\def\lamstarescrT{\lambda\subescr^{{\raise 0.5pt\hbox{$\nthinsp *$}}T}}
\begin{document}

\title{An elementary proof of\\
linear programming optimality conditions\\
without using Farkas' lemma
}

\author{
           Anders Forsgren\\
           Optimization and Systems Theory\\
           Department of Mathematics\\
           KTH Royal Institute of Technology\\
           Stockholm, Sweden\thanks{andersf@kth.se}
\and
           Margaret H.\ Wright\\
           Department of Computer Science\\
           Courant Institute of Mathematical Sciences\\
           New York University\\
           New York, New York, USA\thanks{mhw@cs.nyu.edu}
}

\maketitle

\begin{abstract}
Although it is easy to prove
the sufficient conditions for optimality of a linear
program, the necessary conditions pose a
pedagogical challenge.  A widespread
practice in deriving the necessary conditions
is to invoke Farkas' lemma, but proofs of
Farkas' lemma typically involve ``nonlinear'' topics
such as separating
hyperplanes between disjoint convex sets, or else more
advanced LP-related material such as duality and anti-cycling
strategies in the simplex method. An alternative
approach taken previously by several authors is to
avoid Farkas' lemma through a direct proof of the necessary
conditions.  In that spirit,
this paper presents what we believe to be an ``elementary''
proof of the necessary conditions that does not rely on
Farkas' lemma and is independent of
the simplex method, relying only on
linear algebra and a perturbation
technique published in 1952 by Charnes.  No claim
is made that the results are
new, but we hope that the proofs may be useful for
those who teach linear programming.
\end{abstract}

\section{Introduction}

In many contexts, particularly in business and economics,
linear programming (LP) is taught as a self-contained
subject, including proofs of necessary and sufficient
conditions for optimality.  The
proof of sufficient conditions is straightforward, but,
as explained below, the necessary conditions are seen by
many as pedagogically challenging.
Broadly speaking, these conditions
are proved in two different ways.

The first invokes 
Farkas' lemma,\footnote{The {\sl Chicago Manual of Style Online},
{\tt www.chicagomanualofstyle.org}, prefers repeating the ``s''
after the apostrophe in indicating possession by a word
ending in ``s'', but states that it is also correct to omit the
post-apostrophe ``s''.  We have chosen the second option.}
which can be stated in a
surprisingly large number of forms (for example,
\cite[pages 89--93]{Schrijver}) and which
itself must be proved.
According to the history sketched in \cite{Broyden},
the initial
statement of Farkas' lemma was published in 1894, with its
best-known
exposition appearing in 1902.  Despite the passage of more
than a century
since its correctness was established,
different proofs of the lemma have continued to
be devised, based on an array of motivations
categorized in \cite{Broyden} as geometric,
algebraic, and/or algorithmic.

A widely used means of proving Farkas' lemma relies on
separating hyperplane theorems (for example,
\cite[pages 205--207]{Fletcher},
\cite[pages 297--301]{GMW},\cite[pages 170--172]{BT}),
but there can be some discomfort in bringing these
more advanced topics into an LP course
whose audience is familiar only with basic linear algebra.
Even so, this approach is especially convenient when
optimality conditions for a range of
increasingly complicated
constrained optimization problems are to be presented
(for example, \cite[pages 326--329]{NocWright}).

Those who wish to keep Farkas' lemma but
prefer to avoid separating hyperplanes can choose instead from
a non-trivial number of ``elementary''
proofs of the lemma.  Some of these, such as \cite{Dax, Svanberg},
involve properties of linear least-squares problems.
An algebraic proof related to orthogonal
matrices is given in \cite{Broyden} (see also \cite{RoosTerlaky}),
and \cite{Bartl07} features a linear-algebraic approach to
proving
Farkas' lemma and other theorems of the alternative.

A second strategy is to prove the necessary conditions
for LP optimality without explicitly calling on Farkas' lemma.
This can be done in a variety of ways, for example using
LP-related results such as
duality (see \cite[page 165]{BT},
\cite[pages 112-113]{FMW}) or finite termination of
the simplex method with a guaranteed anticycling strategy
\cite[page 86]{Schrijver}.  A recent proof of
LP optimality conditions \cite{Forsgren} 
is independent of the simplex method, relying instead on
linear algebra and
a perturbation technique introduced by \cite{Charnes} in
the context of resolving degeneracy.

This paper, which falls into the
second group, presents proofs of optimality
conditions for linear programs expressed in
a generic form that includes both equalities and inequalities;
see (\ref{eqn-togetherform}).
The problem form may seem inconsequential, especially
since all known LP problem forms can be mechanically
transformed into one another.  But form
affects substance to a perhaps surprising extent,
and can have
a major effect on how students and practitioners think about 
linear programs and algorithms for solving them.  

Linear programs are (probably) most frequently expressed in
textbooks using one of several variations
on ``standard form'', of which the following is typical:
\begin{equation}
\label{eqn-stdform}
\minimize{x\in\Re^n}\;\; c\T x 
\quad\hbox{subject to}\quad Ax = b
\quad\hbox{and}\quad x\ge 0, 
\end{equation}
where $A$ is $m\times n$ with $m\le n$, $b\in\Re^m$,
$c\in\Re^n$, and $A$ is assumed
to have rank $m$. 
Two key features of this version of standard form
are that the ``general'' constraints involving
$A$ are all equalities, and that the only inequalities are
simple lower bounds on the variables.

Standard form is very closely
tied to the simplex method, which is described in many
papers and books (for example, the 1963
classic \cite{Dantzig}, \cite{Chvatal} and 
\cite{Vanderbei}) and which was, for almost 40 years,
essentially the only method for solving linear programs.
However, since the 1984 ``interior-point revolution'' in
optimization (for example, \cite{MHWint,NemTodd}),
a thorough treatment of linear programming requires
presentation of interior-point methods.
These are easier to
motivate with {\sl all-inequality form}, which resembles
a generic form for constrained optimization: 
\begin{equation}
\label{eqn-allineqform}
\minimize{x\in\Re^n}\;\; c\T x 
\quad\hbox{subject to}\;\; Ax \ge b,
\quad\hbox{where $A$ is $m\times n$}. 
\end{equation}

A linear program in standard form (\ref{eqn-stdform}) may be
transformed (by reformulating the constraints and/or adding
variables) into an equivalent linear program in all-inequality form
(\ref{eqn-allineqform}) and the other way around. This paper considers
a generic mixed form in which equality and inequality constraints are
denoted separately:
\begin{equation}
\label{eqn-togetherform}
\minimize{x\in\Re^n}\;\; c\T x 
\quad\hbox{subject to}\;\; \Aeq x  = \beq
\;\;\hbox{and}\;\; \Aineq x \ge \bineq, 
\end{equation}
where $\Aeq$ is $\meq\times n$ with
$\rank(\Aeq) = \meq$ and $\Aineq$ is $\mineq\times n$.
This form corresponds to all-inequality
form when $\meq = 0$, i.e., when $\Aeq$ is empty,
and to standard form when $\Aineq = I_n$ (the
$n$-dimensional identity) and $\bineq=0$.
This means that results on standard form as well as all-inequality
form are immediately available from our results.
We define the combined matrix $A$ and vector $b$, each with
$m = \meq + \mineq$ rows, as
\begin{equation}
\label{eqn-abfulldef}
  A = \mtx{c}{\Aeq\\
              \Aineq}
\quad\hbox{and}\quad
  b = \mtx{c}{\beq\\
              \bineq}.
\end{equation}
where the index sets ${\Escr}$ and ${\Iscr}$ are
${\Escr} = \{1,2,\dots,\meq\}$ and
${\Iscr} = \{\meq + 1, \dots, \meq + \mineq\}$.

Sections 2--4 contain a summary of background results
that would be part of any course on linear programming;
they are included
to make the paper self-contained.
The results in Sections 5--8 are not new in substance, but
may be unfamiliar in form.  In any case
we hope that they might provide a useful option for
proving LP optimality.
For completeness, Section~\ref{sec-farkas}
states and proves Farkas' lemma using
the results in this paper; Section~\ref{sec-summary} summarizes
the logical flow of results.

\section{Notation, definitions, and background results}

It is assumed that $c\ne 0$ and $A \ne 0$.
The $i$th row of $A$ (\ref{eqn-abfulldef}) is denoted
by $a_i^T$ and the $i$th component of $b$ by $b_i$. 
The problem constraints are said to be
{\sl consistent\/} or {\sl feasible\/} if there exists at least
one $\xhat$ such that $\Aeq \xhat = \beq$
and $\Aineq\xhat \ge \bineq$,
and an $\xhat$ that satisfies the constraints
is called a {\sl feasible point}.
An immediate result, noted explicitly for completeness, is
that linearity of the constraints means that
every point on the line joining two distinct feasible
points $\xhat$ and $\xbar$ is also feasible.

Optimality subject to constraints is inherently a relative
condition, involving
comparison of objective function values at 
a possible optimal point $\xhat$ with those at other feasible
points.
In such a comparison, {\sl active\/} constraints play a crucial role.
The $i$th constraint is said to be {\sl active\/} at a feasible point $\xhat$
if $a_i^T \xhat = b_i$.
At a feasible point,
the equality constraints $\Aeq\xhat = \beq$ must be active,
but an inequality constraint may be active
or inactive (strictly satisfied).
The set of indices of inequality constraints active at
a feasible point $\xhat$ is denoted by $\Iscract(\xhat)$,
and $\Iscrinact(\xhat)$ denotes the
set of indices of the inactive inequality constraints
at $\xhat$.
We use $\Ascr(\xhat)$ to denote the set of active
constraints, which means that $\Ascr(\xhat) = \Escr \cup \Iscract(\xhat)$.
Let $\Aact\subiscrback(\xhat)$ denote the matrix
of rows of $\Aineq$ corresponding to
active inequality constraints, and similarly for
$\bact\subiscr(\xhat)$, so that, by definition,
$\Aact\subiscrback(\xhat) \xhat = \bact\subiscr(\xhat)$.
The {\sl active-constraint matrix\/} $\Aact(\xhat)$ then consists of
$\Aeq$ and $\Aact\subiscrback(\xhat)$:
\begin{equation}
\label{eqn-actmatdef}
  \Aact(\xhat) = \mtx{c}{\Aeq\\
                          \Aact\subiscrback(\xhat)}.
\end{equation}

The following definition, in which positivity of $\alpha^i$ is
crucial, allows us to characterize feasible directions at a feasible point.
\begin{definition}[Feasible direction.]
\label{def-feasdir}
The $n$-vector  
$p$ is a {\sl feasible direction\/} for the
constraints $\Aeq x = \beq$ and $\Aineq x \ge \bineq$
at the feasible
point $\xhat$ if $p\ne 0$ and there exists $\alpha^i > 0$
such that $\xhat + \alpha p$ is feasible for
$0 < \alpha\le \alpha^i$, where $\alpha^i$ may be infinite.
\end{definition}

By linearity, the value of the
$i$th constraint when moving from a feasible point $\xhat$ to
$\xhat + \alpha p$, where $p\ne 0$ and $\alpha > 0$, is given by
\begin{equation}
\label{eqn-xhatpert}
  a_i^T (\xhat + \alpha p) = a_i^T \xhat + \alpha a_i^T p.
\end{equation}
Relation (\ref{eqn-xhatpert}) shows that, to maintain
feasibility with respect to an equality constraint $i$, $p$
must
satisfy $a_i^T p = 0$.
When $i$ is an {\sl inactive\/} inequality constraint,
(\ref{eqn-xhatpert}) implies
that, even if $a_i^T p < 0$, constraint $i$
will remain satisfied at $\xhat + \alpha p$ if $\alpha> 0$ is
sufficiently small.
But if $i$ is an active inequality constraint, so that $a_i^T \xhat = b_i$, it follows from
(\ref{eqn-xhatpert}) that $\xhat + \alpha p$
will be feasible with respect to constraint $i$
for $\alpha > 0$ only if $a_i^T p \ge 0$.

Although inactive inequality constraints do not have an immediate
local effect on feasibility, they may limit the size
of the step that can be taken along a feasible direction $p$.  
\begin{definition}[The maximum feasible step.]
\label{def-maxfeasible}
Given a feasible point $\xhat$ and a feasible direction $p$, let
$\Dscr(\xhat, p)$ (for ``decreasing'') denote the set of
indices of inequality constraints that are inactive at $\xhat$
for which $a_i^T p < 0$:
$$
  \Dscr(\xhat,p) \definedas 
  \{ i \mid i\in\Iscrinact(\xhat)\;\;\hbox{and}\;\;
  a_i^T p < 0\}.
$$
If $\Dscr(\xhat,p) \ne \emptyset$, 
the positive scalar $\sigma^i$ (the
step to constraint $i$ along $p$) is defined as
\begin{equation}
\label{eqn-steptoi}
  \sigma^i \definedas \frac{b_i-a_i^T \xhat}{a_i^T p}
\quad\hbox{for $i\in\Iscrinact(\xhat)$ and $a_i^T p < 0$.}
\end{equation}
The {\sl maximum feasible step}, denoted by $\sigmahat$,
is the smallest such step,
$\sigmahat \definedas\min_{i\in\Dscr(\xhat,p)} \sigma^i$.
Any inequality constraint $i$ for which $\sigma^i = \sigmahat$,
of which there may be more than one,
becomes active at $\xhat + \sigmahat p$.  If
$\Dscr(\xhat,p) = \emptyset$, $\sigmahat$ is taken as $+\infty$.
\end{definition}

In addition to feasibility, optimality
conditions need to ensure that the objective
function is as small as possible.

\begin{definition}[Descent direction.]
The vector $p$ is a {\sl descent direction} for the objective
function $c^T x$ if $c^T p < 0$.
\end{definition}

\begin{definition}[Feasible descent direction.]\label{def-feasdesc}
The direction $p$ is a feasible descent direction
at a feasible point $\xhat$ if $\Aeq p = 0$,
$\Aact\subiscrback(\xhat) p \ge 0$, and
$c^T p < 0$.  No feasible descent direction exists at
$\xhat$ if there are no feasible directions or if
$c^T p \ge 0$ for all feasible directions $p$.
\end{definition}

Obvious optimality conditions can now be stated in terms of
existence or non-existence of a feasible descent direction.
\begin{lemma}[Necessary and sufficient optimality
conditions---Version I.]
\label{lem-necsuff-one}
\nextline
When minimizing $c^T x$ subject to
$\Aeq x = \beq$ and $\Aineq x\ge \bineq$, the feasible point $\xstar$ is optimal
if and only if no feasible descent direction
exists at $\xstar$.
\end{lemma}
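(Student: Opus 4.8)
The plan is to establish the equivalence by proving each implication in contrapositive form, since feasible points and the chord joining them are easier to manipulate than the negated statements. The entire argument relies only on linearity of the constraints and on the local feasibility analysis already recorded around (\ref{eqn-xhatpert}); no separation or Farkas-type result enters at this stage.

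To show that the absence of a feasible descent direction implies optimality, I would prove the contrapositive: if $\xstar$ is not optimal, a feasible descent direction exists. Given a feasible $\xbar$ with $c^T\xbar < c^T\xstar$, I would take $p = \xbar - \xstar$. Subtracting the equality constraints, satisfied with equality by both points, yields $\Aeq p = 0$. Comparing the active inequality rows, where $\Aact\subiscrback(\xstar)\xstar = \bact\subiscr(\xstar)$ while feasibility of $\xbar$ gives $\Aact\subiscrback(\xstar)\xbar \ge \bact\subiscr(\xstar)$, yields $\Aact\subiscrback(\xstar) p \ge 0$. Finally $c^T p = c^T\xbar - c^T\xstar < 0$, which also forces $p \ne 0$. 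Thus $p$ satisfies every requirement of Definition \ref{def-feasdesc}.

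To show that optimality implies the nonexistence of a feasible descent direction, I would again argue the contrapositive: a feasible descent direction $p$ at $\xstar$ yields a lower objective value. The key step is to verify that such a $p$ is genuinely a feasible direction in the sense of Definition \ref{def-feasdir}, i.e.\ that a strictly positive step can be taken. I would read off (\ref{eqn-xhatpert}) constraint by constraint: equalities are preserved because $\Aeq p = 0$; active inequalities stay satisfied because $\Aact\subiscrback(\xstar) p \ge 0$ makes $a_i^T(\xstar + \alpha p) \ge b_i$; and each inactive inequality, being strict at $\xstar$, remains satisfied for all sufficiently small $\alpha > 0$. Taking the minimum of the finitely many step bounds gives a single $\bar\alpha > 0$ with $\xstar + \alpha p$ feasible for $0 < \alpha \le \bar\alpha$, and then $c^T(\xstar + \alpha p) = c^T\xstar + \alpha c^T p < c^T\xstar$ contradicts optimality.

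I do not anticipate a serious obstacle: each direction collapses to a short linear-algebra identity once the right vector is chosen. The one point needing care---the main obstacle, such as it is---is the inactive-inequality bookkeeping in the second implication, where finiteness of the constraint set is what lets us pass from ``each inactive constraint tolerates some positive step'' to ``one positive step keeps them all feasible.'' This is precisely the content foreshadowed in the discussion after (\ref{eqn-xhatpert}) and formalized by the maximum feasible step of Definition \ref{def-maxfeasible}.
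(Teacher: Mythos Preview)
Your proposal is correct and mirrors the paper's own argument: both directions are handled by contrapositive, using $p=\xbar-\xstar$ to build a feasible descent direction when $\xstar$ is suboptimal, and using a small positive step along a given feasible descent direction to exhibit a better feasible point. The paper's proof is terser in the second implication (it simply asserts the existence of a positive feasible step, relying on the discussion around~(\ref{eqn-xhatpert})), whereas you spell out the inactive-inequality bookkeeping explicitly; substantively the arguments are the same.
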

\begin{proof}
The ``only if'' result follows because existence of
a feasible descent direction $p$ at a feasible point
$\xstar$ implies that there is a positive $\alpha$ such that
$\xstar + \alpha p$ is feasible and
$c^T (\xstar + \alpha p) = c^T\xstar + \alpha c^T p < c^T
\xstar$. Hence, $\xstar$ cannot be optimal.

To show the ``if'' result, assume that $\xstar$ is feasible but not
optimal. Then there is a feasible point $\xtilde$ such that $c^T
\xtilde < c^T \xstar$. Since $\xtilde$ is feasible, we must have $\Aeq
\xtilde = \beq$ and $\Aact\subiscrback(\xstar) \xtilde \ge
\bact\subiscrback(\xstar)$. In addition, it holds that $\Aeq \xstar =
\beq$ and $\Aact\subiscrback(\xstar) \xstar =
\bact\subiscrback(\xstar)$. Hence, $c^T (\xtilde-\xstar)<0$, $\Aeq
(\xtilde -\xstar) = 0$ and $\Aact\subiscrback(\xstar) (\xtilde
-\xstar) \ge 0$, so that $\xtilde-\xstar$ is a feasible descent
direction by Definition~\ref{def-feasdir}.
\end{proof}

Although Lemma~\ref{lem-necsuff-one} gives necessary
and sufficient conditions for LP optimality,
its usefulness is limited because it offers no way
to verify these conditions.   This is the point in teaching LP
where Farkas' lemma usually enters the picture, but
we now take a different route
to the necessary and sufficient conditions for LP optimality.

\section{Multipliers and optimality}

An important feature of constrained optimization problems is
the implicit existence of quantities
that do not appear in the problem statement
yet play a crucial role in optimality conditions. These
quantities consist of $m$ (Lagrange) {\sl multipliers},
or {\sl dual variables}, one for each
constraint, that connect the objective
and the constraints.  The next result shows that
existence of a multiplier
with certain properties produces a lower bound on the
objective value in the feasible region.

\begin{proposition}[Lower bound on LP objective.]
\label{prop-lowerbound}
Assume that $x\in\Re^n$ satisfies $\Aeq x = \beq$ and $\Aineq x\ge
\bineq$.  Further assume that there exists a multiplier
$\lambda\in\Re^m$ such that $A^T\lambda = c$ and $\lambda\subiscr\ge
0$, where $\lambda\subiscr$ denotes the $\mineq$-vector of components
of $\lambda$ corresponding to inequality constraints.  (No sign
restrictions apply to the multipliers for equality constraints.)
Then $c^T x - \lambda^T b = \lambda^T (Ax-b) \ge 0$.
\end{proposition}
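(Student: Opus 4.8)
The plan is to verify the stated equality first and then establish the inequality by exploiting the block structure of $A$ and $b$ together with the sign conditions on $\lambda\subiscr$. For the equality, I would take the transpose of the hypothesis $A^T\lambda = c$ to obtain $c^T = \lambda^T A$, multiply on the right by $x$, and subtract $\lambda^T b$ from both sides; this gives $c^T x - \lambda^T b = \lambda^T A x - \lambda^T b = \lambda^T(Ax - b)$ with no use of feasibility or sign restrictions, so it is pure algebra.

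For the inequality, the key is to split $\lambda^T(Ax-b)$ according to the partition of $A$ and $b$ in (\ref{eqn-abfulldef}) into equality and inequality blocks:
\[
  \lambda^T(Ax-b) = \lambda\subescr^T(\Aeq x - \beq) + \lambda\subiscr^T(\Aineq x - \bineq).
\]
The first term vanishes because feasibility forces $\Aeq x = \beq$, so the multipliers $\lambda\subescr$ for the equality constraints contribute nothing and their lack of sign restriction is harmless. The entire content then rests on the second term.

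The crux---and the only place where both remaining hypotheses are used---is showing that $\lambda\subiscr^T(\Aineq x - \bineq) \ge 0$. Here I would observe that feasibility gives $\Aineq x - \bineq \ge 0$ componentwise, while the multiplier assumption gives $\lambda\subiscr \ge 0$ componentwise; the inner product is therefore a sum of products of pairs of nonnegative numbers and is itself nonnegative. Combining the two terms yields $\lambda^T(Ax-b)\ge 0$, which together with the equality established above completes the proof. I do not anticipate a genuine obstacle: the argument is elementary, and the one thing to be careful about is the sign bookkeeping---that the nonnegativity of $\lambda\subiscr$ is exactly what pairs with the nonnegativity of the inequality residuals, and that the free-signed equality multipliers are neutralized by the exact satisfaction of $\Aeq x = \beq$.
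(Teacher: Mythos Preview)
Your proof is correct and follows essentially the same route as the paper: substitute $c = A^T\lambda$ to obtain $c^T x - \lambda^T b = \lambda^T(Ax-b)$, split into the equality and inequality blocks, kill the equality block via $\Aeq x = \beq$, and bound the inequality block using $\lambda\subiscr \ge 0$ together with $\Aineq x - \bineq \ge 0$. The paper's version is more compressed but identical in substance.
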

\begin{proof}
  Let $x$ be feasible. The assumed existence of $\lambda$ means that
  we can substitute $A^T\lambda$ for $c$ and use the facts that
  $\lambda\subiscr \ge 0$, $\Aeq x - \beq = 0$, and and
  $\Aineq\xhat-\bineq\ge 0$.  We then have
$$ 
c^T x - \lambda^T b =  
\lambda^T (A x -  b) = \lamescrT(\Aeq x - \beq)
+ \lamiscrT(\Aineq x - \bineq) \ge 0.
$$ 
It follows that $c^T x$ is bounded below by
$\lambda^T b$ for every feasible $x$.
\end{proof}

For a general linear program, a qualifying
$\lambda$ may not exist.  Furthermore, there can
be more than one vector $\lambda$ satisfying the given
conditions, each producing a different value of $\lambda^T b$.  
However, something special happens if
$\lamiscrT(\Aineq\xhat-\bineq) = 0$, allowing us to state
{\sl sufficient\/} conditions for LP optimality.

\begin{proposition}[Sufficient conditions for LP optimality.]
\label{prop-suffone}
Consider the linear program of minimizing
$c^T x$ subject to the consistent constraints
$\Aeq x = \beq$ and $\Aineq x\ge \bineq$.  The feasible point $\xstar$ is optimal
if a multiplier
$\lamstar\in\Re^m$ exists with the following three properties:
(i) $A^T\lamstar = c$,
(ii) $\lamstariscr\ge 0$, and (iii) $\lamstarT(A\xstar-b) = 0$.
The optimal objective value is $c^T \xstar = \lamstarT b$.
\end{proposition}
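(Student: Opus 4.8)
The plan is to obtain optimality of $\xstar$ in two moves: first establish $\lamstarT b$ as a uniform lower bound on the objective over the whole feasible region, and then show that the candidate point $\xstar$ attains this bound exactly. The entire argument is carried by Proposition~\ref{prop-lowerbound}, so no separate machinery is needed.

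First I would note that hypotheses (i) and (ii) of the present statement, namely $A^T\lamstar = c$ and $\lamstariscr \ge 0$, are precisely the conditions that Proposition~\ref{prop-lowerbound} requires of a qualifying multiplier. Applying that proposition with $\lambda = \lamstar$ to an arbitrary feasible $x$ therefore yields $c^T x - \lamstarT b \ge 0$, i.e.\ $c^T x \ge \lamstarT b$ for every feasible $x$. Next I would evaluate the objective at $\xstar$ itself: using (i) to replace $c^T$ by $\lamstarT A$ and then invoking the complementarity-type condition (iii), $\lamstarT(A\xstar - b) = 0$, I obtain $c^T \xstar = \lamstarT A\xstar = \lamstarT b$. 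Thus $\xstar$ meets the lower bound with equality. Combining the two facts, every feasible $x$ satisfies $c^T x \ge \lamstarT b = c^T \xstar$, so $\xstar$ minimizes $c^T x$ over the feasible region and the optimal value is $\lamstarT b$, as claimed.

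There is no substantive obstacle here; the only point requiring care is recognizing that condition (iii) is exactly what converts the generic inequality $c^T x \ge \lamstarT b$ into an equality at $\xstar$. Writing $\lamstarT(A\xstar-b) = \lamstarescrT(\Aeq\xstar - \beq) + \lamstariscrT(\Aineq\xstar - \bineq)$, the first term vanishes automatically by feasibility while (iii) forces the second—the sum of the nonnegative products $\lamstari(a_i^T\xstar - b_i)$ over inequality constraints—to vanish as well. In effect (iii) guarantees that the inequality multipliers carry no weight on constraints inactive at $\xstar$, so that none of the slack in the lower bound of Proposition~\ref{prop-lowerbound} is lost, and the bound is achieved.
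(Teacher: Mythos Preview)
Your proposal is correct and follows essentially the same approach as the paper: invoke Proposition~\ref{prop-lowerbound} under hypotheses (i) and (ii) to obtain the lower bound $\lamstarT b$, then use (iii) to show that $\xstar$ attains it. Your second paragraph unpacks why (iii) forces equality, which the paper leaves implicit, but the argument is the same.
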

\begin{proof}
Since $\lamstar$ satisfies (i) and (ii),
Proposition~\ref{prop-lowerbound} gives the lower bound $\lamstarT b$
on the optimal value of the linear program. However, (iii) and
Proposition~\ref{prop-lowerbound} show that the lower bound is
attained for $\xstar$, so that $\xstar$ is optimal.
\end{proof}

The crucial relationship $\lamstarT(A\xstar-b) = 0$, which means
that, for every $i=1,\dots, m$, at least one of
$\{\lamstari, a_i^T\xstar - b_i\}$ must be zero,
is called {\sl complementarity}.

The following result shows that the complementarity condition does not
directly tie the multiplier $\lamstar$ to a particular $\xstar$.
Rather, if complementarity holds for one $\xstar$, it must hold for $\lamstar$
together with any optimal solution $\xhat$.

\begin{proposition}[Properties of an optimal LP solution.]
\label{prop-specialprops}
Consider minimizing $c^T x$ subject to the consistent constraints
$\Aeq x = \beq$ and $\Aineq x\ge \bineq$.  Assume that a multiplier
$\lamstar$ exists such that $A^T\lamstar = c$,
$\lamstariscr \ge 0$,
and assume that the optimal objective value is $\lamstarT b$.
Then a
feasible point $\xhat$ is optimal if and only if
$\lamstarT(A\xhat -b) = 0$.
\end{proposition}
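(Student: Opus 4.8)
The plan is to prove both directions of the equivalence, using Proposition~\ref{prop-lowerbound} and Proposition~\ref{prop-suffone} as the main tools. The hypotheses give us a multiplier $\lamstar$ with $A^T\lamstar = c$ and $\lamstariscr \ge 0$, together with the assumption that the optimal objective value equals $\lamstarT b$. I would first establish the ``if'' direction: assuming $\xhat$ is feasible and satisfies $\lamstarT(A\xhat - b) = 0$, I note that $\lamstar$ then satisfies all three conditions (i), (ii), (iii) of Proposition~\ref{prop-suffone} \emph{with respect to the point $\xhat$}, since (i) and (ii) hold by hypothesis and (iii) is precisely the assumed complementarity $\lamstarT(A\xhat - b) = 0$. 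Proposition~\ref{prop-suffone} then immediately yields that $\xhat$ is optimal.

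For the ``only if'' direction, suppose $\xhat$ is feasible and optimal. Since $\xhat$ is feasible and $\lamstar$ satisfies (i) and (ii), Proposition~\ref{prop-lowerbound} applies and gives
$$
  c^T \xhat - \lamstarT b = \lamstarT(A\xhat - b) \ge 0.
$$
On the other hand, $\xhat$ is optimal, so by the assumption that the optimal value is $\lamstarT b$, we have $c^T \xhat = \lamstarT b$, i.e.\ $c^T\xhat - \lamstarT b = 0$. Combining the displayed identity with this equality forces $\lamstarT(A\xhat - b) = 0$, which is exactly the complementarity condition we wished to establish.

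The key conceptual point, and the step most likely to be mildly subtle, is recognizing that $\lamstar$ is a \emph{fixed} multiplier satisfying (i) and (ii) independently of any particular point, so the lower bound $\lamstarT b$ furnished by Proposition~\ref{prop-lowerbound} holds uniformly over all feasible points. The hypothesis that the optimal value \emph{equals} $\lamstarT b$ is what pins down the value $c^T\xhat$ for any optimal $\xhat$, and it is the interaction between this fixed lower bound and the sign of $\lamstarT(A\xhat - b)$ that drives both directions. No genuine obstacle arises: both directions reduce to the nonnegativity expression $\lamstarT(A\xhat-b) = c^T\xhat - \lamstarT b$, read once as $\ge 0$ (feasibility) and once as $= 0$ (optimality plus the value assumption). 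I would therefore expect the proof to be short, essentially a direct application of the two preceding propositions with the roles of $\xstar$ and $\xhat$ interchanged.
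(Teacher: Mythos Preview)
Your proposal is correct and follows essentially the same approach as the paper: both arguments rest on the identity $c^T x - \lamstarT b = \lamstarT(Ax-b)\ge 0$ from Proposition~\ref{prop-lowerbound}, combined with the hypothesis that the optimal value equals $\lamstarT b$. The only cosmetic difference is that you split the biconditional into two explicit directions and invoke Proposition~\ref{prop-suffone} for the ``if'' part, whereas the paper handles both directions in a single sentence directly from the displayed identity; since Proposition~\ref{prop-suffone} is itself proved via Proposition~\ref{prop-lowerbound}, the underlying logic is identical.
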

\begin{proof}
For any feasible $x$, Proposition~\ref{prop-lowerbound} gives $c^T x -
\lamstarT b = \lamstarT (A x - b) \ge 0$. Hence, under the assumption
that the optimal value is $\lamstarT b$, a point $\xhat$ is optimal if and
only if $\lamstarT(A\xhat - b) = 0$.
\end{proof}

\section{Vertices and their properties}

Certain feasible points, known as {\sl vertices}, are extremely
important in linear programming.
\begin{definition}[Vertex.]
\label{def-vertex}
Given the consistent constraints $\Aeq x = \beq$
and $\Aineq x \ge \bineq$, the point $\xhat$ is a
vertex if $\Aeq \xhat = \beq$, $\Aineq \xhat \ge \bineq$, and
the active-constraint matrix $\Aact(\xhat)$ of (\ref{eqn-actmatdef})
has rank $n$.
\end{definition}
An immediate consequence is that
there cannot be a vertex when the rank of the full constraint
matrix $A$ (\ref{eqn-abfulldef}) is less than $n$.
A nice feature of standard-form linear programs
(\ref{eqn-stdform}) is that, when the constraints are
consistent, a
vertex must exist because the inequality constraints
consist of the $n$-dimensional identity.
This is not true in general for all-inequality form
(\ref{eqn-allineqform}),
even when the objective function is bounded below in the
feasible region;
consider, for example, minimizing $x_1 + x_2$ subject to
$x_1 + x_2 \ge 1$.

A vertex $\xhat$ is the unique solution of the linear
system formed by any nonsingular submatrix of $\Aact(\xhat)$,
which has rank $n$ by definition.
A fundamental result is that the definitions of vertex
and extreme point are equivalent, where
an extreme point
is a feasible point that does not lie on the line segment
joining two distinct
feasible points. (See, for example, Section 2.2 of
\cite{BT} for a detailed treatment of related topics.)

A simple combinatorial argument shows that
the number of vertices is bounded above
by $\tbinom{m}{n}$.
Given the consistent constraints
$\Aeq x = \beq$ and $\Aineq x \ge \bineq$, the
set of vertices $\Vscr(A,b)$ can be found by enumerating
and testing for feasibility
every combination of the
constraints in which the equality constraints
hold with equality and a total of $n$ constraints hold with equality.
(Of course, this procedure is not practical for large $m$ and $n$.)

There are two kinds of vertices.
At a {\sl nondegenerate vertex\/} $\xhat$,
exactly $n$ constraints are active and
the active-constraint matrix $\Aact(\xhat)$ is nonsingular.
At a {\sl degenerate vertex\/}
$\xhat$, there are $n$ linearly independent active constraints,
but more than $n$ constraints are active.

The next two small results are stated formally for
later reference.
\begin{result}\label{res-inactindep}
Let $F$ be a $q\times n$ nonzero matrix with $\rank(F) < n$
whose $i$th row is $f_i^T$.  Assume that $p$ is a nonzero
$n$-vector such that $Fp=0$.
If $g$ is a vector such that $g^T p \ne 0$, then $g^T$ is
linearly independent
of the rows of $F$, i.e.
$$
  \rank\shortmtx{c}{F\\
               g^T} = \rank(F) + 1.
$$
\end{result}
\begin{proof}
If $g^T$ were a linear combination of the rows of $F$,
then $g^T = y^T F$ for some vector $y$.  Since
$Fp=0$, substituting $y^T F$
for $g^T$ would give $g^T p = y^T F p = 0$,
contradicting our assumption that $g^T p \ne 0$.
\end{proof}

Note that the implication in Result~\ref{res-inactindep}
does not go the other way: if $Fp = 0$, $p\ne 0$, and
$g^T p = 0$, then $g^T$ can nonetheless be linearly
independent of
the rows of $F$.  This can be seen by example:
$$
  F = \mtx{rrrr}{1 & 1 & 0 & 0\\
                 0 & 1 & 0 & -1},
\quad
g^T = \mtx{cccc}{0 & 0 & 1 & 0},
\quad\hbox{and}\quad
p^T = \mtx{cccc}{-1 & 1 & 0 & 1}.
$$

\begin{result}\label{res-mustbeonea}
Let $D$ be an $m\times n$ matrix with $\rank(D) = n$
whose $i$th row is $d_i^T$.  Let $\Dtilde$ denote a subset of
rows of $D$ such that $\rank(\Dtilde) = r < n$, and
assume that $p$ is a nonzero vector such that
$\Dtilde p = 0$.  Then there is at least one row $d_j^T$ of $D$
that is not included in $\Dtilde$ such that (i) $d_j^T p \ne 0$
and (ii) $d_j^T$ is linearly independent of the rows of
$\Dtilde$.
\end{result}
\begin{proof}
Let the $r\times n$ matrix $F$ consist of $r$ linearly
independent rows of $\Dtilde$, so that every row
in $\Dtilde$ that is not in $F$ is a non-trivial linear
combination of the rows of $F$.  Hence the assumption that
$\Dtilde p = 0$ implies that $Fp=0$.

Because $\rank(\Dtilde) = r$ and
$\rank(D) = n$, we can assemble a matrix $G$
consisting of $n-r$ rows of $D$ 
that are not in $\Dtilde$ and that are linearly
independent of the rows of $\Dtilde$, such that
the $n\times n$ matrix 
$$
  M = \shortmtx{c}{F\\
                   G}
\;\;\hbox{is nonsingular}.
$$
Since $p\ne 0$, nonsingularity of
$M$ means that $Mp\ne 0$ and, since $Fp=0$, this will be
true only if
$Gp\ne 0$.  Given how $G$ is defined,
there must be a row $d_j^T$ in $D$
but not in $\Dtilde$ such that $d_j^T p \ne 0$,
and linear independence of $d_j^T$ follows
directly from Result~\ref{res-inactindep}.
\end{proof}

Our next step is to determine when there is an
{\sl optimal\/} vertex for the LP (\ref{eqn-togetherform}).
We know that a vertex can
exist only if the constraints are consistent and $\rank(A) = n$.
Using a theoretical procedure,
the next lemma guarantees the existence of an {\sl optimal\/}
vertex under the added assumption
that $c^T x$ is bounded below in the feasible region.

\begin{lemma}[Existence of an optimal vertex.]\label{lem-optvert} 
Consider minimizing $c^T x$ subject to the consistent
constraints $\Aeq x = \beq$ and $\Aineq x\ge \bineq$, where
the rank of $A$ (\ref{eqn-abfulldef}) is $n$.
Let $\Vscr$ denote the set of all
vertices for the given constraints. Then either
\begin{enumerate}
\item[(i)] $c^T x$ is bounded below in the feasible region and there
is a vertex $\vstar\in\Vscr$ where the smallest value of $c^T x$ in
the feasible region is achieved; or
\item[(ii)] $c^T x$ is unbounded below in the feasible region and there
exists an $n$-vector $p$ such that $\Aeq p = 0$,
$\Aineq p\ge 0$, and $c^T p < 0$. 
\end{enumerate}
\end{lemma}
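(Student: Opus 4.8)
The plan is to prove a slightly stronger \emph{per-point} statement, from which the stated dichotomy follows at once: for \emph{every} feasible point $\xhat$, at least one of the following holds---(a) there is a vertex $v$ with $c^T v \le c^T \xhat$, or (b) there is an $n$-vector $p$ with $\Aeq p = 0$, $\Aineq p \ge 0$, and $c^T p < 0$. I would establish this by a finite iterative procedure whose engine is to raise, one constraint at a time, the rank of the active-constraint matrix while never increasing the objective, using Results~\ref{res-inactindep} and~\ref{res-mustbeonea} to keep control of the rank.

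First I would describe a single iteration starting from a feasible iterate $x$ with $r = \rank(\Aact(x))$. If $r = n$, then $x$ is a vertex and, since the objective has never increased, $c^T x \le c^T \xhat$, giving (a); stop. If $r < n$, the null space of $\Aact(x)$ contains a nonzero $p$, and because $\Aeq p = 0$ and $\Aact\subiscrback(x)\,p = 0$ the active constraints stay active along $\pm p$. \textbf{Descent case:} if some null-space $p$ has $c^T p \ne 0$, replace $p$ by $-p$ if needed so that $c^T p < 0$ and move along $p$, taking the maximum feasible step $\sigmahat$ of Definition~\ref{def-maxfeasible}. If $\sigmahat = +\infty$, then no inactive inequality blocks $p$, so $\Aineq p \ge 0$ (active rows give $0$, inactive rows give a nonnegative value); together with $\Aeq p = 0$ and $c^T p < 0$ this is the vector required in (b), and we stop. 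Otherwise $\sigmahat$ is finite, a previously inactive inequality $j$ becomes active at $x + \sigmahat p$ with $a_j^T p < 0$, the objective strictly decreases, and by Result~\ref{res-inactindep} the row $a_j^T$ is independent of $\Aact(x)$, so the rank rises. \textbf{Level case:} if $c^T p = 0$ for every null-space $p$, pick any nonzero such $p$ and apply Result~\ref{res-mustbeonea} with $D = A$ (rank $n$) and $\Dtilde = \Aact(x)$; it yields a row $a_j^T \notin \Aact(x)$---necessarily an inactive inequality, since $\Aact(x)$ contains all of $\Aeq$---with $a_j^T p \ne 0$ and independent of $\Aact(x)$. Hence one of $\pm p$ is blocked by constraint $j$, and moving the corresponding finite step leaves the objective unchanged while again raising the rank. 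Since the rank strictly increases at every non-terminating step and is bounded by $n$, the procedure halts after at most $n$ iterations, proving the per-point claim.

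I would then assemble the two alternatives. Consistency of the constraints provides a feasible point, so applying the claim there already gives $\Vscr \ne \emptyset$. If (b) holds for some feasible point, producing $p$, then $x + \alpha p$ is feasible for all $\alpha > 0$ while $c^T(x + \alpha p) \to -\infty$, so $c^T x$ is unbounded below and alternative (ii) holds. If (b) holds for no feasible point, then (a) holds for every feasible $\xhat$, furnishing a vertex $v$ with $c^T v \le c^T \xhat$; since $\Vscr$ is finite and nonempty, $c^T$ attains its least vertex value at some $\vstar \in \Vscr$, and the inequality then gives $c^T \vstar \le c^T \xhat$ for all feasible $\xhat$, so $\vstar$ is optimal and the objective is bounded below---this is alternative (i). The alternatives are mutually exclusive since one asserts boundedness and the other unboundedness.

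The main obstacle I anticipate is verifying that each non-terminating step \emph{strictly} increases the rank of the active-constraint matrix. This requires confirming both that the newly active constraint is linearly independent of the current active set (supplied by Result~\ref{res-inactindep} through $a_j^T p \ne 0$ against $\Aact(x)p = 0$) and, in the level case, that a blocking inactive inequality must exist at all (supplied by Result~\ref{res-mustbeonea}, which uses $\rank(A) = n$ to force $Ap \ne 0$ and hence blocking along $p$ or $-p$). The remaining care is bookkeeping: that active constraints stay active under $\pm p$ because $\Aact(x)p = 0$, that the objective is monotone along the chosen steps, and that finite termination cleanly separates cases (i) and (ii).
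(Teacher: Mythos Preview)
Your proposal is correct and follows essentially the same approach as the paper's proof: iterate from a feasible point, increase the rank of the active-constraint matrix at each non-terminating step (using Results~\ref{res-inactindep} and~\ref{res-mustbeonea}) without increasing the objective, terminate at a vertex or at an unbounded descent direction, and then wrap up via finiteness of the vertex set. The only difference is cosmetic: you split the iteration into a ``descent case'' versus a ``level case'' according to whether $c$ is orthogonal to the whole null space of $\Aact(x)$, whereas the paper fixes a single null-space direction $p$, invokes Result~\ref{res-mustbeonea} up front to obtain the set $\Jscr_k$, and then branches on the sign of $c^T p$---both organize the same argument.
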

\begin{proof}
Starting with any feasible point $x_0$, we define
an iterative sequence $\{x_k\}$ that produces
a vertex $v_j\in\Vscr$ such that $c^T v_j \le c^T x_0$, unless we find
an $n$-vector $p$ such that $\Aeq p = 0$,
$\Aineq p\ge 0$, and $c^T p < 0$. 
At $x_k$, $\Aact_k$ denotes the
active-constraint matrix $\Aact(x_k)$
defined by (\ref{eqn-actmatdef}); note that the
rows of $\Aeq$ are always present in $\Aact_k$.
\begin{description}
\item[Step 0.]
Set $k=0$. 
\item[Step 1.]
If $x_k$ is a vertex (i.e., $\rank(\Aact_k) = n$) 
then $x_k = v_j\in\Vscr$ for some $j$.
Stop; a vertex has been found such that
$c^T v_j \le c^T x_0$.  Otherwise, go to Step 2.

\item[Step 2.] Since $\rank(\Aact_k) < n$,
there exists a nonzero $p$ satisfying $\Aact_k p = 0$, so that
any movement along $p$ does not alter the values
of constraints active at $x_k$.
Now we consider the inequality constraints that are
inactive at $x_k$.
It follows from Result~\ref{res-mustbeonea} with
$\Aact_k$ playing the role of $\Dtilde$ that there must be
at least one inactive inequality constraint index $j$ such that
$a_j^T p \ne 0$; let $\Jscr_k = \{j\mid a_j^T x_k > b_k
\;\;\hbox{and}\;\; a_j^T p \ne 0\}$.

\item[Step 3.]
If $c^T p = 0$, we select $j\in\Jscr_k$
and set $p_k = \pm p$, choosing the sign
so that $a_j^T p_k < 0$ (since either choice
will satisfy $\Aact_k p_k = 0$).
Otherwise, if $c^T p \ne 0$, choose
$p_k = \pm p$ so that, for some $j\in\Jscr_k$, 
$a_j^T p_k < 0$ and $c^T p_k < 0$; if this is not possible
it must hold that $c^T p_k < 0$
and $a_j^T p_k \ge 0$ for all $j\in\Jscr_k$, in which case
we exit and conclude that (ii) holds.

Applying Definition~\ref{def-maxfeasible},
let $\alpha_k>0$ be the maximum feasible step along $p_k$.
Then
all the constraints inactive at $x_k$ remain feasible
at  $x_{k+1} = x_k + \alpha_k p_k$, and at least one
additional linearly independent inequality constraint becomes active
there.  Hence $\rank(\Aact_{k+1}) > \rank(\Aact_k)$
and $c^T x_{k+1} \le c^T x_k$.

\item[Step 4.] 
Increase $k$ to $k+1$ and return to Step 1.

\end{description}

For each initial $x_0$ there will be no more than $n$
executions of Step 1, since $\rank(A) = n$
and each pass through Step 3 increases the
rank of $\Aact_k$ by at least one.

This procedure confirms that, for every feasible
point $x_0$, there is either a vertex $v_j\in\Vscr$ such
that $c^T v_j \le c^T x_0$ or an $n$-vector $p$ such that $\Aeq p = 0$,
$\Aineq p\ge 0$, and $c^T p < 0$. 
Let $\vstar$ denote a vertex
such that $c^T \vstar \le c^T v_j$ for all
$v_j$ in the finite set $\Vscr$. If there is a feasible $x_0$ such that $c^T
x_0 < c^T\vstar$, the procedure must give a $p$ such that $\Aeq p = 0$,
$\Aineq p\ge 0$, and $c^T p < 0$ and (ii) holds. Otherwise, $c^T
x_0\ge c^T \vstar$ for all feasible $x_0$ and (i) holds.
\end{proof}

\section{Optimality at a nondegenerate vertex}

It is straightforward to derive
necessary and sufficient conditions for optimality
of a {\sl nondegenerate\/} vertex.

\begin{proposition}
[Optimality of a nondegenerate vertex.]
\label{prop-nondegenopt}
Consider the linear program of minimizing $c^T x$
subject to the consistent constraints $\Aeq x = \beq$
and $\Aineq x\ge \bineq$.
Assume that $\xstar$ is a nondegenerate
vertex where the active set is $\Ascr(\xstar)$ and that
the $n$-vector $\lamact$ is the solution of
$\Aact(\xstar)^T \lamact = c$.  Then
$\xstar$ is optimal if and only if $\lamact\subiscr\ge 0$,
where $\lamact\subiscr$ denotes the components of
$\lamact$ corresponding to active inequality constraints.
\end{proposition}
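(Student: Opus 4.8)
The plan is to prove the two implications separately, in each case exploiting that $\Aact(\xstar)$ is square and nonsingular because $\xstar$ is a \emph{nondegenerate} vertex. Nonsingularity is what guarantees that $\Aact(\xstar)^T\lamact = c$ has the unique solution $\lamact$ and, more importantly for the converse, that we may prescribe the values $a_i^T p$ on all active constraints at once when building a search direction. For the ``if'' direction I would obtain optimality directly from Proposition~\ref{prop-suffone}; for the ``only if'' direction I would argue the contrapositive by producing a feasible descent direction whenever some active inequality multiplier is negative, and then invoke Lemma~\ref{lem-necsuff-one}.

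For sufficiency, assume $\lamact\subiscr \ge 0$. I would extend $\lamact$ to a full $m$-vector $\lambda$ by assigning to its active components (for $\Escr$ and for $\Iscract(\xstar)$) the corresponding entries of $\lamact$, and setting the components for inactive inequality constraints to zero. Then $A^T\lambda = \Aact(\xstar)^T\lamact = c$, since the zero components contribute nothing, which is property~(i); property~(ii) holds because $\lambda\subiscr$ consists of the nonnegative entries $\lamact\subiscr$ together with zeros; and complementarity~(iii) holds because every term $\lambda_i(a_i^T\xstar - b_i)$ vanishes---either the constraint is active, so $a_i^T\xstar - b_i = 0$, or it is an inactive inequality, so $\lambda_i = 0$. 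Proposition~\ref{prop-suffone} then yields optimality of $\xstar$, with value $c^T\xstar = \lambda^T b$.

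For necessity I would prove the contrapositive: if some component of $\lamact\subiscr$, say the one indexed by active inequality constraint $s$, satisfies $(\lamact)_s < 0$, then $\xstar$ is not optimal. Using nonsingularity of $\Aact(\xstar)$, define $p$ by $\Aact(\xstar)p = e_s$, where $e_s$ is the coordinate vector selecting the row of constraint $s$; equivalently $a_i^T p = 0$ for every active $i \ne s$ and $a_s^T p = 1$. By construction $\Aeq p = 0$ and $\Aact\subiscrback(\xstar) p \ge 0$ (zero for active inequalities other than $s$, and $1$ for $s$), while $c^T p = (\Aact(\xstar)^T\lamact)^T p = \lamact^T \Aact(\xstar) p = \lamact^T e_s = (\lamact)_s < 0$. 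Hence $p$ is a feasible descent direction in the sense of Definition~\ref{def-feasdesc}. I would then confirm that $p$ is a genuine feasible direction: equalities and the active inequalities with $i \ne s$ stay satisfied with equality, constraint $s$ becomes strictly satisfied because $a_s^T p > 0$, and the inactive inequalities---strictly satisfied at the vertex---remain feasible for all sufficiently small $\alpha > 0$. Lemma~\ref{lem-necsuff-one} then gives that $\xstar$ is not optimal.

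The only real subtlety, and hence the main obstacle, lies in the necessity direction: one must build a single direction that simultaneously leaves all active constraints except $s$ unchanged, moves strictly into the feasible side of constraint $s$, and strictly decreases $c^T x$. Nondegeneracy is exactly what makes this routine, since the square nonsingular $\Aact(\xstar)$ lets us solve for such a $p$ explicitly, and the identity $c^T p = (\lamact)_s$ transfers the sign of the offending multiplier directly to the directional derivative of the objective. The accompanying check that $p$ admits a positive feasible step rests only on the strict satisfaction of inactive inequalities at a vertex, which is immediate.
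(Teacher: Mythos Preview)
Your proof is correct and follows essentially the same approach as the paper: for sufficiency you extend $\lamact$ with zeros on inactive constraints and invoke Proposition~\ref{prop-suffone}, and for necessity you argue the contrapositive by solving $\Aact(\xstar)p = e_s$ to produce a feasible descent direction with $c^Tp = (\lamact)_s < 0$. The paper's writeup is slightly terser (it does not spell out the feasibility check for inactive constraints or cite Lemma~\ref{lem-necsuff-one} explicitly), but the argument is the same.
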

\begin{proof}
Because $\xstar$ is nondegenerate, $\Aact(\xstar)$ is nonsingular,
which means that $\lamact$ is unique.
The ``if'' direction follows
because, as we show next, we can define an $m$-vector $\lamstar$
that satisfies the sufficient conditions of
Proposition~\ref{prop-suffone}.
Assume that the rows of $\Aact(\xstar)$ are ordered with indices
$\{w_1, \dots, w_n\}$,
so that the $j$th component of $\lamact$
is the multiplier for original
constraint $w_j$. The full multiplier $\lamstar$ is then
defined as
\begin{equation}
\label{eqn-lamstardef}
 \lamstar_{w_j}  =  \lamact_j,\quad
 j = 1,\dots, n; \quad\hbox{and}\quad
 \lamstar_j  =  0 \quad\hbox{if $j\notin\Ascr(\xstar)$,}
\end{equation}
so that the multipliers corresponding to inactive inequality
constraints are zero.
Hence $\lamstarT(A\xstar -b) = 0$,
the sufficient conditions of
Proposition~\ref{prop-suffone}
are satisfied, and $\xstar$ is optimal.

For the ``only if'' direction,
suppose that $[\lamact\subiscr]_i$ is strictly
negative for some active inequality constraint.
Because $\Aact(\xstar)$ is nonsingular, there is a
unique direction $p$ satisfying
$\Aact(\xstar) p = e_i$, where $e_i$ is the $i$th
coordinate vector, so that $p$ is a feasible
direction.  It then follows from the
relation $c = \Aact(\xstar)^T \lamact$ that
$c^T p = \lamactT \Aact(\xstar) p = [\lamact\subiscr]_i < 0$ and
$p$ is a feasible descent direction, which means
that $\xstar$ cannot be optimal.
\end{proof}

\section{Optimality at a degenerate vertex}

Difficulties arise in proving necessary optimality conditions
for a degenerate optimal vertex
because Proposition~\ref{prop-nondegenopt}
depends on nonsingularity
of the active-constraint matrix at the vertex $\xstar$.
To address these difficulties, we use properties of a
{\sl working set}, which is closely related to, but not
the same as, the active set; see \cite[page 339]{GMW}
for a more restricted definition.

\begin{definition}[Working set.]
\label{def-working}
Given the consistent constraints $\Aeq x = \beq$ and $\Aineq x\ge \bineq$, let
$\xbar$ be a feasible point, not necessarily a vertex.
Consider a set of $n\subwscr$ distinct indices, 
$\Wscr = \{w_1,\dots, w_{n\subwscr}\}$, where $\meq \le n\subwscr\le n$ and
$w_i = i$ for $i = 1$, \dots $\meq$, i.e., the first $\meq$ indices
in $\Wscr$ are the indices of the equality constraints. 
Let $\Working$ be the associated $n\subwscr\times n$ working matrix
whose $i$th row is $a_{w_i}^T$, so that the first $\meq$ rows of $\Working$
are $\Aeq$.  Let $b\subwscr$ denote the vector consisting
of components of $b$ corresponding to the indices
in $\Wscr$.  Then $\Wscr$ is a {\emph working set\/} at $\xbar$
if the following two properties apply:
\begin{enumerate}
\item[(1)] Every inequality constraint whose index is in
$\Wscr$ is active at $\xbar$, i.e., $W\xbar = b\subwscr$;
\item[(2)] The rows of $\Working$ are linearly independent. 
\end{enumerate}
\end{definition}

At a vertex, by definition
there are $n$ linearly independent active constraints, so that it is always
possible to define a nonsingular working-set matrix $\Working$ and
an associated unique vector $\lambda\subwscr$ that satisfies
$\Working^T \lambda\subwscr = c$, where component
$j$ of $\lambda\subwscr$ corresponds to original constraint $w_j$.
Thus components $1$, \dots, $\meq$ of $\lambda\subwscr$ correspond
to the $\meq$ equality constraints, and the remaining 
components are associated with ``working'' (active)
inequality constraints.

If $\xstar$ is an optimal nondegenerate vertex, the active
set $\Aact(\xstar)$ is a working set with $n\subwscr = n$, $\lambda\subwscr$ is
the same as the unique solution $\lamact$ of
$\Aact(\xstar)^T \lamact = c$,
and $[\lambda\subwscr]_i \ge 0$ for $w_i\in\Iscract(\xstar)$.
But if $\xhat$ is an optimal vertex that is degenerate,
there can be more than one working set.  This complicates
optimality conditions because, even if $\widehat\Wscr$ is a working
set at a degenerate optimal vertex $\xhat$, it may not be true that
$[\lambda\subwscrhat]_i \ge 0$ for inequality constraints
in the working set.

Consider, for example, the following all-inequality
two-variable
linear program of  minimizing $c^T x$ subject to three
inequality constraints $Ax\ge b$, with
\begin{equation}
\label{eqn-pertdegeninfig}
c = \thinmtx{r}{1\,\\
              -\half},\quad
A = \mtx{cr}{1 & 1\\
             1 & \frac{5}{2}\\
             1 & -2},\quad\hbox{and}\quad
b =\thinmtx{r}{3\\
           6\\
           -3}.
\end{equation}
(Note that, for this example, there are no equality constraints.)
The optimal solution is a degenerate vertex $\xstar = (1,2)^T$.
There are three working sets,
$\Wscr_1 = \{1, 2\}$, $\Wscr_2 = \{1, 3\}$, and
$\Wscr_3 = \{2, 3\}$, and the associated multipliers are
$$
  \lambda\subwscrone = \mtx{r}{2\\
                            -1}, \quad
  \lambda\subwscrtwo = \mtx{c}{\half\\[3pt]
                            \half},
  \quad\hbox{and}\quad
   \lambda\subwscrthree = \mtx{c}{\third\\[3pt]
                               \twothirds}.
$$
Although $\Wscr_1$ identifies two linearly independent rows of
$A$, optimality of $\xstar$ cannot be determined by
checking the signs of the components of $\lambda\subwscrone$.

We therefore define an {\sl optimal working set\/} at an
optimal point as one that will confirm optimality, as
happens with working sets $\Wscr_2$ and $\Wscr_3$ in
the example.
\begin{definition}[Optimal working set.]
\label{def-optworking}
Given the consistent constraints $\Aeq x = \beq$ and
$\Aineq x\ge \bineq$ and the objective
function $c^T x$, assume that $\Wscr$ is a working set at the
optimal point $\xbar$ (not necessarily a vertex), with
$\Working$ the associated working matrix.  Then
$\Wscr$ is an \emph{optimal working set} if
(a) the linear system
$\Working^T \lambda\subwscr = c$ is compatible
(which means that $\lambda\subwscr$ exists and is unique), and
(b) $[\lambda\subwscr]_i\ge 0$ if $w_i$ is the index
of an active inequality constraint.
\end{definition}
Note that the uniqueness mentioned in
property (a) follows
because the columns of $\Working^T$ are linearly
independent (Definition~\ref{def-working}).

The next proposition shows that, if the constraints
are consistent, $\rank(A) = n$, and $c^T x$
is bounded below
in the feasible region, then an optimal vertex and an
associated optimal working set always exist, even if
the vertex is degenerate.  An optimal working
set is obtained from the solution of
a {\sl perturbed linear program\/} where,
as in \cite{Forsgren},
the perturbations reflect motivation introduced in
\cite{Charnes}; 
see, for example, \cite{Dantzig} and
\cite[pages 34--35]{Chvatal}.
The crucial property of the perturbations is that
their presence guarantees existence of an optimal
{\sl nondegenerate\/} vertex for the perturbed
problem.

\begin{proposition}[Existence of an optimal vertex,
multiplier, and working set.]
\label{prop-findoptworking}
\nextline
Consider minimizing $c^T x$ subject to the
consistent constraints $\Aeq x = \beq$ and $\Aineq x\ge \bineq$, where
$c^T x$ is bounded below in the feasible region and
$\rank(A) = n$.  Then there are an
optimal vertex $\xstar$ and an associated optimal working
set $\Wscr = \{w_1, \dots, w_n\}$
of $n$ indices, such that the corresponding working-set matrix $W$ is
nonsingular, from which an $m$-dimensional multiplier $\lamstar$ can
be constructed such that (i) $A^T \lamstar = c$, (ii)
$\lamstarT(A\xstar -b) = 0$, and (iii) $\lamstariscr\ge 0$.
\end{proposition}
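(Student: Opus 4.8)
The plan is to remove the difficulty caused by degeneracy through a Charnes-style perturbation of the right-hand side, solve the resulting \emph{nondegenerate} problem with the machinery already in hand, and then recover the desired vertex, working set, and multiplier by letting the perturbation vanish. Concretely, I would leave $\Aeq$, $\Aineq$, $c$, and $\beq$ unchanged and replace $\bineq$ by a perturbed vector $\bineq(\epsilon)$ whose $i$th component is $b_i-\epsilon^{\,i}$ for $i\in\Iscr$ (distinct powers of a parameter $\epsilon>0$). Because this only loosens the inequalities, the perturbed problem is consistent for every $\epsilon>0$. Moreover, since the recession cone $\{p:\Aeq p=0,\ \Aineq p\ge 0\}$ depends only on $A$, boundedness below of $c^T x$ is equivalent to the absence of a $p$ in this cone with $c^T p<0$, a condition independent of the right-hand side; hence $c^T x$ remains bounded below over the perturbed feasible region.

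For each sufficiently small $\epsilon>0$, Lemma~\ref{lem-optvert} then places us in case (i): the perturbed problem has an optimal vertex $\vstar(\epsilon)$. The crucial feature of the perturbation is that, for all small enough $\epsilon$, every vertex of the perturbed problem is nondegenerate, so $\vstar(\epsilon)$ has exactly $n$ active constraints and a nonsingular active-constraint matrix $\Aact(\vstar(\epsilon))$. Its active set is therefore a working set of size $n\subwscr=n$ that necessarily contains all of $\Escr$, and the working matrix coincides with $\Aact(\vstar(\epsilon))$. Proposition~\ref{prop-nondegenopt} applies verbatim: the unique solution $\lamact$ of $\Aact(\vstar(\epsilon))^T\lamact=c$ satisfies $\lamact\subiscr\ge 0$.

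Next I would let $\epsilon\to 0$. There are only finitely many $n$-element subsets of $\{1,\dots,m\}$ that contain $\Escr$ and index linearly independent rows of $A$, so along some sequence $\epsilon_k\to 0^+$ the working set of $\vstar(\epsilon_k)$ is a fixed set $\Wscr=\{w_1,\dots,w_n\}$ with fixed nonsingular working matrix $W$. On this sequence $W\vstar(\epsilon_k)=b\subwscr(\epsilon_k)\to b\subwscr$, so $\vstar(\epsilon_k)\to\xstar\definedas W^{-1}b\subwscr$. Passing to the limit in $\Aeq\vstar(\epsilon_k)=\beq$ and $\Aineq\vstar(\epsilon_k)\ge\bineq(\epsilon_k)$ shows that $\xstar$ is feasible for the original problem, and since $W\xstar=b\subwscr$ makes all $n$ constraints in $\Wscr$ active with $W$ nonsingular, $\xstar$ is a vertex and $\Wscr$ is a working set at it. Because $W$ is the same matrix for every $k$, the multiplier $\lamact=W^{-T}c$ is literally constant along the sequence, so the sign condition $\lamact\subiscr\ge 0$ survives in the limit.

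Finally I would assemble the full multiplier exactly as in (\ref{eqn-lamstardef}): set $\lamstar_{w_j}=\lamact_j$ for $w_j\in\Wscr$ and $\lamstar_i=0$ otherwise. Then $A^T\lamstar=W^T\lamact=c$ gives (i); complementarity (ii) holds termwise, since each index in $\Wscr$ is active at $\xstar$ while the remaining multipliers vanish; and (iii) $\lamstariscr\ge 0$ follows from $\lamact\subiscr\ge 0$ together with the zero padding. Conditions (i)--(iii) are precisely the sufficient conditions of Proposition~\ref{prop-suffone}, so $\xstar$ is optimal and $\Wscr$ is an optimal working set in the sense of Definition~\ref{def-optworking}. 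I expect the one genuinely delicate point to be the highlighted property of the perturbation, namely that distinct powers of $\epsilon$ force every vertex of the perturbed problem to be nondegenerate for all sufficiently small $\epsilon$ (with only finitely many values of $\epsilon$ to be excluded); this is the content of the Charnes technique and is where the real work lies, whereas the pigeonhole limiting argument and the construction of $\lamstar$ are routine.
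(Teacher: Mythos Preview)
Your proposal is correct and follows essentially the same Charnes-perturbation strategy as the paper: perturb $\bineq$ by distinct powers of $\epsilon$, use Lemma~\ref{lem-optvert} to obtain an optimal vertex of the perturbed problem, argue that this vertex is nondegenerate for small $\epsilon$, apply Proposition~\ref{prop-nondegenopt} there, and then transfer $\Wscr$, $\xstar=W^{-1}b\subwscr$, and $\lamstar$ back to the original problem. The only minor technical difference is that the paper carries out the nondegeneracy argument in full (the polynomial in $\epsilon$ with distinct powers cannot vanish for all small $\epsilon$) and then, instead of your pigeonhole-plus-limit step, fixes a single sufficiently small $\epsilon$ and verifies feasibility of $\xstar$ for the original right-hand side directly via the same polynomial-sign reasoning.
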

\begin{proof}
The proof has two parts: analyzing a perturbed
linear program,
and then using the resulting nondegenerate optimal vertex
for the perturbed problem
to define a solution and optimal working set for
the original problem.

\smallskip
\noindent
{\bf Part 1: Solving a perturbed linear program.}
Consider the perturbed linear program:
\begin{equation}
\label{eqn-pertlp}
\minimize{x\in\Re^n}\quad c\T x 
\quad\hbox{subject to}\quad \Aeq x = \beq \;\;\hbox{and}\quad
    \Aineq x \ge \bineq - e,
\end{equation}
where $e = (\epsilon, \epsilon^2, \dots, \epsilon^{m_{\Iscr}})^T$
and $\epsilon > 0$ is arbitrary and ``sufficiently small''.
(Note that the equality constraints are not perturbed.)
Because the constraints of the original LP are consistent, so
are the constraints of the perturbed problem.  The objective
$c^T x$ and constraint matrix $A$ are the same
in both the original and perturbed problems. 

We know from Lemma~\ref{lem-optvert} that there must be an
optimal vertex, denoted by $x_{\epsilon}$,
for the perturbed problem.  Let
$\Aact_\epsilon$ denote the active-constraint matrix at
$x_{\epsilon}$ with respect to the perturbed constraints
of (\ref{eqn-pertlp}).  Without loss of generality,
since $x_{\epsilon}$ may be degenerate,
we write the active constraints at $x_{\epsilon}$ as
\begin{equation}
\label{eqn-active-epsilon}
 \Aact_{\epsilon} x_{\epsilon} =
 \bact_{\epsilon}  - \eacteps,
\quad\hbox{with}\quad
\Aact_{\epsilon} =
   \mtx{c}{\Working_{\epsilon}\\
          Y_\epsilon},
  \;\;
 \bact_{\epsilon} = \mtx{c}{b\subwscreps\\
                            b\subyscreps},
 \;\;\hbox{and}\;\;
\eacteps = \mtx{c}{e\subwscreps\\
                           e\subyscreps},
\end{equation}
where $\Working_{\epsilon}$ is $n\times n$ and
nonsingular.
Let $\Wscr_{\epsilon}$ denote the set of $n$ indices
$\{w_1, \dots, w_n\}$ of the original constraints
corresponding to the rows of $\Working_{\epsilon}$,
where the matrix $\Aeq$ corresponding to the equality
constraints occupies the
first $\meq$ rows of $\Working_{\epsilon}$.
The remaining $n-\meq$
rows of $\Working_{\epsilon}$ and the rows of
$Y_{\epsilon}$ contain normals of active inequality
constraints whose indices are not known in advance.
The first $\meq$ components of $e\subwscreps$
are zero (since the equalities are not perturbed), followed by
$n-\meq$ distinct powers of $\epsilon$:
\begin{equation}
\label{eqn-epspowers}
e\subwscreps = (0,\dots, 0, \epsilon^{w_1}, \epsilon^{w_2},\dots, 
\epsilon^{w_{n-m_{\scriptscriptstyle{\Escr}}}})^T,
\quad\hbox{with}\quad 1 \le w_i \le \mineq,\;\;
  i=1,\dots, n-\meq.
\end{equation}

We next show by contradiction 
that $x_{\epsilon}$ must be nondegenerate
for all sufficiently small $\epsilon$, i.e., that
$Y_{\epsilon}$ must be empty.
Let $y^T$ denote the normal of an inequality constraint
in $Y_{\epsilon}$, and assume that it corresponds to the $j$th
inequality constraint of the original problem.
Because $\Working_{\epsilon}$ is nonsingular, there
is a unique vector $q$ such that
$y^T = q^T {\Working}_{\epsilon}$.  Consequently,
since
$\Working_{\epsilon} x_{\epsilon} = b\subwscreps - e\subwscreps$
(see (\ref{eqn-active-epsilon})),
we have
$$
  y^T x_{\epsilon} = q^T {\Working}_{\epsilon} x_{\epsilon} =
  q^T(b\subwscreps - e\subwscreps).
$$ 

By assumption, $x_{\epsilon}$ is an optimal
vertex for the perturbed problem, so that
$y^T x_{\epsilon} \ge [\bineq]_j - \epsilon^j$ (since
constraint $j$ is an inequality).
But our further assumption (\ref{eqn-active-epsilon}) that the constraints in
$Y_{\epsilon}$ are active at $x_{\epsilon}$
for all sufficiently small $\epsilon$ implies that
this relation is an {\sl equality,\/} i.e.,
$y^T x_{\epsilon}  = [\bineq]_j  - \epsilon^j$.
Substituting $q^T (b\subwscreps - e\subwscreps)$
for $y^T x_{\epsilon}$ and rearranging, we obtain
$$
q^T b\subwscreps - [\bineq]_j  - q^T e\subwscreps + \epsilon^j = 0.
$$
The left-hand side of this relation is a polynomial
in $\epsilon$, in which 
$q^T b\subwscreps$ and $[\bineq]_j$ are independent of $\epsilon$
and the inner product $q^T e\subwscreps$ is a linear
combination of the distinct powers
of $\epsilon$ from (\ref{eqn-epspowers}), none of which
is equal to $j$, and there is a term $\epsilon^j$.
Such a polynomial can equal zero only
when $\epsilon$ is exactly equal to one of the polynomial's roots.
Hence equality cannot hold when $\epsilon$ is
allowed to be any arbitrarily small positive value, and we obtain
a contradiction.  The same argument applies for all the
constraints in $\Yscr_{\epsilon}$, so that
$y^T x_{\epsilon}  > [\bineq]_j  - \epsilon^j$ for all 
$j\in\Yscr_{\epsilon}$.  It follows that
$Y_{\epsilon}$ is empty and that only
the $n$ constraints in $\Wscr_{\epsilon}$ are active,
confirming that $x_{\epsilon}$ is a nondegenerate optimal vertex
with active set $\Aact_{\epsilon} = \Working_{\epsilon}$.

Letting $\lamact_{\epsilon}$ denote the necessarily unique
solution of $\Working_{\epsilon}^T \lamact_{\epsilon} = c$,
it follows from the ``only if''
direction of Proposition~\ref{prop-nondegenopt}
that the components of $\lamact_{\epsilon}$
corresponding to active inequality constraints are nonnegative:
\begin{equation}
\label{eqn-pertlamprops}
\Working_{\epsilon}^T \lamact_{\epsilon} = c
\quad\hbox{and}\quad
[\lamact_{\epsilon}]_i \ge 0
\;\;\hbox{when}\;\;
w_i \in\Iscract_{\epsilon}.
\end{equation}

\smallskip
\noindent
{\bf Part 2. Defining an optimal solution for the original problem.}
We now show that the working set $\Wscr_{\epsilon}$ for
the perturbed problem is
an {\sl optimal\/} working set for the original problem;
see Definition~\ref{def-optworking}.
Taking $\Wscr = \Wscr_{\epsilon} = \{w_1,\dots, w_n\}$
and $\Working = \Working_{\epsilon}$,
we define $\xstar$ as the (unique) solution of
$\Working\xstar = b\subwscr$, so that the $n$
linearly independent constraints represented in $\Working$
are active at $\xstar$.

Let $y^T$ denote the normal of any constraint not in
$\Working$, and assume that it corresponds to the
$j$th inequality constraint in the original problem.
It remains to show that $y^T \xstar \ge [\bineq]_j$,
i.e., that $\xstar$ is feasible with respect to the
corresponding original inequality constraint.
The proof of Part 1 shows that
there is a unique $q$ such that $y^T = q^T \Working$
and that
$q^T b\subwscr - [\bineq]_j  - q^T e\subwscr + \epsilon^j > 0$
for all sufficiently small $\epsilon$.
Since $y^T \xstar = q^T b\subwscr$, we have
\begin{equation}
\label{eqn-xstarstrict}
  y^T\xstar - [\bineq]_j - q^T e\subwscr + \epsilon^j > 0,
\end{equation}
which has two consequences.
\begin{enumerate}
\item[(i)]
A result from \cite[Lemma 1, Chapter 10]{Dantzig} says that
a polynomial in $\epsilon > 0$
will be positive for all sufficiently small $\epsilon$
if and only if the coefficient of the smallest power of
$\epsilon$ is positive. 
The cited result implies
that, if the constant term $y^T\xstar - [\bineq]_j$ of the
polynomial in (\ref{eqn-xstarstrict}) is nonzero,
it must be positive, in which case
original constraint $j$ is inactive at $\xstar$.
\item[(ii)] If $y^T\xstar - [\bineq]_j = 0$, then by
definition original constraint $j$ is
active at $\xstar$.  (This case applies when $\xstar$ is
degenerate.)
\end{enumerate}
In either case, $y^T \xstar \ge [\bineq]_j$ and $\xstar$ is
feasible with respect to all of the original inequality constraints
$\Aineq x\ge \bineq$.

The remaining ingredient needed to verify that
$\xstar$ and $\Wscr$ are optimal involves multipliers.  
Since the nonsingular working matrix $\Working$ has been
taken as $\Working_{\epsilon}$ and
$\Working^T \lamact_{\epsilon} = c$, we can define
an $m$-vector $\lamstar$, where $\lamstar\subwscr$ denotes
the vector of components of $\lamstar$ associated with
constraints in the working set:
\begin{equation}
\label{eqn-full-lamdef}
  \lamstar\subwscr = \lamact_{\epsilon}
\quad\hbox{and}\quad
  \lamstari = 0 \;\;\hbox{if}\;\;
  i\ne \Wscr,
\end{equation}
noting that $\lamstari \ge 0$ if
the associated constraint is an inequality in
$\Wscr$; see (\ref{eqn-pertlamprops}).
Thus we have obtained an optimal vertex $\xstar$
and an optimal working set.  Using the optimal working
set, a multiplier $\lamstar$ can be defined satisfying the
sufficient optimality conditions of Proposition~\ref{prop-suffone}.
\end{proof}

The just-completed proof shows that the
perturbed LP is guaranteed to have a nondegenerate optimal
vertex, but this vertex will in general depend on
the value of $\epsilon$ and
the ordering of the powers of $\epsilon$ in
the perturbed constraints.
This non-uniqueness of $x_{\epsilon}$
and $\Wscr_{\epsilon}$ is
illustrated in Figure~\ref{fig-degenpic} for the
all-inequality linear program (\ref{eqn-pertdegeninfig}).
The contours of the linear objective are labeled as 
``$\phi$''.
The optimal degenerate vertex $\xstar = (1,2)^T$ for
the original LP is shown on the left,
where the constraints include a thin shading
on the infeasible side. 
In the remaining two figures, the
constraints have been perturbed and the thickness of
the shading
reflects the size of the perturbation.
The value of $\epsilon$ is deliberately taken as
$\half$ so that the effects can easily be seen.
In the middle figure, constraints $1$, $2$, and $3$ are
perturbed respectively by $\epsilon$, $\epsilon^2$, and
$\epsilon^3$, producing a
single nondegenerate vertex where constraints $2$ and
$3$ are active.  In the rightmost figure,
the constraint perturbations are $\epsilon^2$, $\epsilon$,
and $\epsilon^3$, creating two distinct nondegenerate
vertices, with constraints $1$ and $3$ active at the optimal
vertex (which differs from the optimal vertex in the
middle figure).  Our earlier analysis of
(\ref{eqn-pertdegeninfig})
showed that the optimal working
sets are indeed $\Wscr_3 = \{2,3\}$ and $\Wscr_2 = \{1,3\}$,
shown respectively in the
middle and rightmost figures.


\begin{figure}[htb]
\label{fig-pertdegen}
\begin{center}
\centering
\centerline{\epsfbox{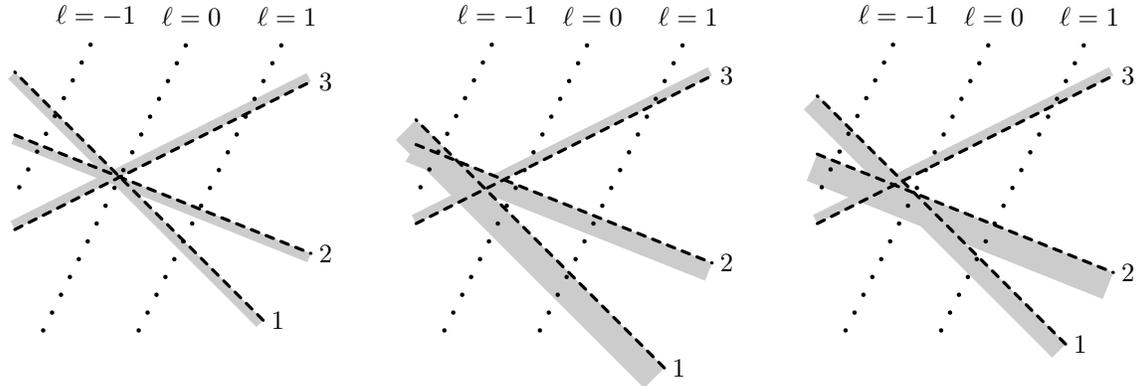}}
\parbox[t]{.9\textwidth}{
\caption{\label{fig-degenpic} \small
Effects of perturbing the constraints at a degenerate
optimal vertex.
}}
\end{center}
\end{figure}

\section{Necessary and sufficient optimality
conditions, version 2}

The result of Proposition~\ref{prop-findoptworking}
allows us to state necessary and sufficient
conditions for optimality of a linear
program with the form (\ref{eqn-togetherform})
in which the constraints are consistent, $\rank(A) = n$
(where $A$ is defined by (\ref{eqn-abfulldef})),
and the objective function is bounded below
in the feasible region.  Note that they apply at any
optimal point, whether or not it is a vertex.

\begin{proposition}[Necessary and sufficient optimality conditions---Version II.]
\label{prop-atlast}
Consider the linear program of minimizing $c^T x$ subject to
the consistent constraints $\Aeq x = \beq$ and
$\Aineq x \ge \bineq$, where
$c^T x$ is bounded below in the feasible region and
$\rank(A) = n$.
The point $\xtilde$, which need not be
a vertex, is optimal if and only if $\xtilde$ is
feasible
and there exists an $m$-vector $\lamtilde$ such that
$A^T \lamtilde = c$, $\lamtilde^T(A\xtilde - b) = 0$
and $\lamtilde\subiscr \ge 0$. The optimal objective value is $\lamtilde^T b$.
\end{proposition}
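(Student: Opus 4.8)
The plan is to handle the two directions separately, treating the backward (``if'') implication as essentially immediate and building the forward (``only if'') implication on the existence result already in hand. For the ``if'' direction, suppose $\xtilde$ is feasible and there is a $\lamtilde$ with $A^T\lamtilde = c$, $\lamtilde^T(A\xtilde - b) = 0$, and $\lamtilde\subiscr \ge 0$. These are precisely properties (i)--(iii) of Proposition~\ref{prop-suffone}, so $\xtilde$ is optimal and its objective value is $c^T\xtilde = \lamtilde^T b$. No further work is required here.

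For the ``only if'' direction, I would first invoke Proposition~\ref{prop-findoptworking}. Under the standing hypotheses that the constraints are consistent, $\rank(A) = n$, and $c^T x$ is bounded below, that proposition delivers an optimal vertex $\xstar$ together with a multiplier $\lamstar$ satisfying $A^T\lamstar = c$, $\lamstarT(A\xstar - b) = 0$, and $\lamstariscr \ge 0$; Proposition~\ref{prop-suffone} then identifies the optimal value as $\lamstarT b$. This single $\lamstar$ already supplies two of the three properties demanded of a certifying multiplier, namely $A^T\lamstar = c$ and $\lamstariscr \ge 0$.

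The crux of the argument is the observation that this same $\lamstar$ certifies the arbitrary optimal point $\xtilde$, not merely the vertex $\xstar$ from which it arose. Setting $\lamtilde = \lamstar$, I would appeal to Proposition~\ref{prop-specialprops}: since $\lamstar$ satisfies $A^T\lamstar = c$ and $\lamstariscr \ge 0$ and the optimal value is known to equal $\lamstarT b$, a feasible point is optimal precisely when complementarity holds against $\lamstar$. Because $\xtilde$ is feasible and optimal, we obtain $\lamstarT(A\xtilde - b) = 0$ at once, furnishing the third required property and the stated objective value $\lamtilde^T b$.

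The main obstacle is conceptual rather than computational: one must recognize that complementarity is a relation between a multiplier and the common optimal value, so that a multiplier verified at one optimal point transfers verbatim to every other optimal point. All of the genuine difficulty---producing a multiplier at all in the possibly degenerate case---has already been absorbed into Proposition~\ref{prop-findoptworking} via the Charnes perturbation; once that is cited, the present proof reduces to a brief synthesis of Propositions~\ref{prop-suffone} and~\ref{prop-specialprops}.
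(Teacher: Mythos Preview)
Your proposal is correct and mirrors the paper's own proof essentially step for step: the ``if'' direction is dispatched by Proposition~\ref{prop-suffone}, and the ``only if'' direction invokes Proposition~\ref{prop-findoptworking} to obtain $\xstar$ and $\lamstar$, then uses Proposition~\ref{prop-specialprops} to transfer complementarity to the arbitrary optimal point $\xtilde$ with $\lamtilde = \lamstar$.
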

\begin{proof}
The ``if'' part was proved in Proposition~\ref{prop-suffone}.

To confirm the ``only if'' part, we begin by
observing that
Proposition~\ref{prop-findoptworking}
shows that an optimal vertex $\xstar$ must exist
for the given LP, with an
associated optimal working set $\Wscr$
that allows us to define
an optimal $m$-component multiplier $\lamstar$ such that
$A^T\lamstar = c$, $\lamstarT(A\xstar - b)$,
and $\lamstar\subiscr \ge 0$; see
(\ref{eqn-full-lamdef}).
An important point is that
$\lamstar\subiscr$ contains multipliers for all
the inequality constraints in the
problem. Proposition~\ref{prop-suffone} shows that the 
optimal value is $\lamstarT b$. 

Now suppose that the feasible point $\xtilde$
is optimal, where $\xtilde$
may or may not be a vertex. 
Proposition~\ref{prop-specialprops} states that
$\lamstarT(A\xtilde - b) = 0$ must hold. Hence we can take
$\lamtilde = \lamstar$ as a multiplier for $\xtilde$. Again, we stress
that optimality of $\xtilde$ follows 
directly from existence of the multiplier $\lamtilde$.
\end{proof}

\section{Identifying an optimal working set at an
optimal vertex}

The results proved thus far show that, when the
constraints $\Aeq  x = \beq$ and $\Aineq x \ge \bineq$ are
consistent, $\rank(A) = n$,
and $c^T x$ is bounded below in the feasible
region, an optimal vertex $\xhat$ and a corresponding optimal working
set $\Wscrhat$ of $n$ indices such that $\Workinghat$ is nonsingular must exist
(where the working set leads to
a multiplier $\lamhat$ that ensures optimality). 
We know from Proposition~\ref{prop-atlast}
that $\lamhat$ is also an optimal multiplier for any
optimal point $\xstar\ne\xhat$.
But the active constraints at $\xhat$ and $\xstar$
may be different, which means that the working set
$\Wscrhat$ may not be a valid working set
for $\xstar$ because
$\Workinghat\xstar\ne b\subwscrhat$; see
the example following the proof of 
Proposition~\ref{prop-optworking}.

The next result shows that given a specific
optimal vertex $\xstar$,
a corresponding optimal nonsingular working-set matrix $W$ of
of $n$ indices exists
satisfying Definition~\ref{def-optworking}.
The result relies on
Proposition~\ref{prop-specialprops}, which shows
that the multiplier associated with an optimal vertex
satisfies the sufficient optimality
conditions for any other optimal point.

\begin{proposition}
[Existence of an optimal working set.]
\label{prop-optworking}
For the linear program of minimizing $c^T x$ subject to
the consistent constraints $\Aeq x = \beq$ and
$\Aineq x\ge \bineq$, where
$\rank(A) = n$ and $c^T x$ is bounded below in
the feasible region, suppose that an
optimal vertex $\xstar$ is given.  Then there is an associated
optimal working set $\Wscr$ of $n$ indices, such that the
corresponding working-set matrix $W$ is nonsingular.
\end{proposition}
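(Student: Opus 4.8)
The plan is to extract from the results already established a multiplier that is optimal at the given vertex $\xstar$, and then to pare down and repair the set of constraints carrying that multiplier until it forms a nonsingular working matrix with the correct sign pattern. The whole argument is a Carath\'eodory-type reduction followed by an extension to a basis of active normals.

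First I would obtain a qualifying multiplier. Proposition~\ref{prop-findoptworking} guarantees an optimal vertex $\xhat$, an associated optimal working set, and an $m$-vector $\lamhat$ with $A^T\lamhat = c$ and $\lamhat\subiscr\ge 0$, whose value $\lamhat^T b$ is optimal. Since $\xstar$ is also optimal, Proposition~\ref{prop-specialprops} forces the complementarity relation $\lamhat^T(A\xstar - b) = 0$. Because each inequality term $[\lamhat]_i(a_i^T\xstar - b_i)$ is a product of two nonnegative quantities, every such term must vanish, so every inequality index $i$ with $[\lamhat]_i > 0$ is active at $\xstar$. Writing $A^T\lamhat = c$ row by row then expresses $c$ as a linear combination of active-constraint normals at $\xstar$, with nonnegative coefficients on the inequalities and unrestricted coefficients on the $\meq$ equalities.

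Second, I would reduce this representation to a linearly independent one. As long as the equality normals together with the inequality normals currently carrying a positive coefficient are linearly dependent, there is a nontrivial linear relation among them; since the equality normals alone are independent, that relation must involve at least one inequality normal. After fixing the sign of the relation so that some inequality coefficient decreases, I would move the coefficients along the relation, driving the first inequality coefficient to zero while keeping all inequality coefficients nonnegative, and then discard that index. The equality coefficients are free to change sign, so they impose no obstruction. Iterating produces a subset $T$ of inequality indices, all active at $\xstar$, for which the normals $\{a_i : i\in\Escr\cup T\}$ are linearly independent and still represent $c$ with nonnegative coefficients on $T$.

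Finally, because $\xstar$ is a vertex its active-constraint matrix $\Aact(\xstar)$ has rank $n$, so the independent set $\Escr\cup T$ can be extended by additional active-constraint indices to a set $\Wscr$ of exactly $n$ indices whose normals form a nonsingular matrix $\Working$, with the equalities occupying the first $\meq$ rows as Definition~\ref{def-working} requires. Assigning coefficient zero to the newly added constraints leaves $c$ represented by the rows of $\Working$, and by nonsingularity this is the unique solution of $\Working^T\lambda\subwscr = c$; its inequality components are positive on $T$ and zero on the added constraints, hence all nonnegative. Thus $\Wscr$ meets every condition of Definition~\ref{def-optworking} and is the desired optimal working set. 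I expect the reduction step to be the crux: the care lies in preserving nonnegativity of the inequality multipliers while using the free sign of the equality multipliers to guarantee that every dependency can be exploited to remove an inequality index.
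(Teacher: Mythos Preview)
Your proof is correct, but the Carath\'eodory reduction you highlight as ``the crux'' is actually unnecessary in this setting, and the paper's proof avoids it entirely. The multiplier $\lamhat$ produced by Proposition~\ref{prop-findoptworking} is constructed so that its support is contained in the working set $\Wscrhat$, whose matrix $\Workinghat$ is already nonsingular (see~(\ref{eqn-full-lamdef})). Hence the equality rows together with the inequality rows carrying a positive $\lamhat_i$ form a \emph{subset} of the rows of a nonsingular matrix and are therefore automatically linearly independent; there is no dependency to eliminate. The paper simply names this subset $\Workinghat\subplus$, observes it has full row rank, checks via complementarity that every row of $\Workinghat\subplus$ is active at $\xstar$, and then extends to $n$ rows inside $\Aact(\xstar)$ exactly as you do in your final step.

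So the two arguments share the same skeleton---complementarity at $\xstar$ followed by extension to a basis of active normals---but you insert a reduction that, given what Proposition~\ref{prop-findoptworking} already hands you, does no work. Your version would be genuinely needed if one started only from the bare existence of \emph{some} optimal multiplier (e.g., via duality) without the accompanying nonsingular working set; in that more general situation the Carath\'eodory step is exactly the right tool. Here, though, the paper's route is shorter because it exploits the extra structure already established.
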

\begin{proof}
Proposition~\ref{prop-findoptworking} guarantees
existence of an optimal vertex $\xhat$,
an optimal working set $\Wscrhat$ containing $n$
indices such that the corresponding working-set matrix $\Workinghat$ is
nonsingular, and an $m$-dimensional optimal vector
$\lamhat$.
If $\xstar = \xhat$, we can take $\Wscr = \Wscrhat$
and nothing more is needed.
If $\xstar\ne \xhat$, we show next how to use
$\Wscrhat$ to construct an
optimal working set $\Wscr$ for $\xstar$.

The working set $\Wscrhat$ contains precisely $n$ indices, which
include those of the equality constraints plus a selection of active
inequality constraints.  Defining $\Wscrhat\subplus$ as the set of
indices of the equality constraints plus the indices $i$ of inequality
constraints with positive multipliers $\lamhat_i$, let
$\Workinghat\subplus$ denote the associated submatrix of
$\Workinghat$, i.e., the matrix whose rows correspond to indices in
$\Wscrhat\subplus$. Nonsingularity of $\Workinghat$ implies
that $\Workinghat\subplus$ has full row rank. Since $\xhat$ and
$\xstar$ are both optimal, we know from
Proposition~\ref{prop-specialprops} that complementarity is satisfied
for all constraints at both $\xhat$ and $\xstar$, which means that, if
an inequality constraint in $\Wscrhat$ has a positive multiplier, then
that constraint must be active at both $\xhat$ and $\xstar$. In
addition, all equality constraints are satisfied at both $\xhat$ and
$\xstar$. We therefore conclude that $\Workinghat\subplus$ is a
submatrix of $\Aact(\xstar)$. Defining $\Wscrhat_0$ as the set of
indices $i$ of inequality constraints that are active at $\xstar$ for
which $\lamhat_i=0$ and letting $\Workinghat_0$ denote the
corresponding matrix, it follows that
$$
\Aact(\xstar) = \mtx{cc}{\Workinghat\subplus \\
   \Workinghat_0}.
$$
Consequently, since $\xstar$ is a vertex, $\Aact(\xstar)$ has full
column rank. As $\Workinghat\subplus$ has full row rank, we may
therefore create a nonsingular $n\times n$ working-set matrix $W$ as a
nonsingular $n\times n$ submatrix of $\Aact(\xstar)$ that contains
$\Workinghat\subplus$ and let $\Wscr$ denote the associated indices.
\end{proof}

For example, consider a three-variable
all-inequality LP with six constraints
$Ax\ge b$, where
\begin{equation}
\label{eqn-workexamp}
  A = \mtx{rrr}{0 & 0 & 1\\
                1 & 2 & 1\\
                1 & -1 & 2\\
                1 & 1 & 1\\
                -1 & 0 & 1\\
                0 & 1 & -1},\quad
  b = \mtx{r}{1\\
              5\\
              3\\
              4\\
             -2\\[3pt]
              -\half},\quad\hbox{and}\quad
   c = \mtx{c}{1\\
            2\\
            3}.
\end{equation}
Two degenerate vertices, $\xstar = (2,1,1)^T$ and
$\xhat = (3,\half,1)^T$, are optimal, and the optimal
objective is $c^T\xstar = 7$.
Suppose that $\xhat$ is the optimal vertex produced by
Proposition~\ref{prop-findoptworking}.
The active set at $\xhat$ is
$\Ascr(\xhat) = \{1,2,5,6\}$, and $\Wscrhat = \{1,2,5\}$ 
is an optimal working set, with
$$
  \Workinghat\xhat = \mtx{rcc}{0 & 0 & 1\\
                           1 & 2 & 1\\
                          -1 & 0 & 1}
                     \mtx{c}{3\\
                            \half\\
                            1} = 
        b\subwscrhat = \mtx{r}{1\\
                            5\\
                           -2}
\quad\hbox{and}\quad
  \lamhat\subwscrhat = \mtx{c}{2\\
                            1\\
                            0}.
$$
The associated $6$-component optimal multiplier is
$\lamhat = (2,1,0,0,0,0)^T$.
Using the notation in the proof of
Proposition~\ref{prop-optworking},
$n\subplus = 2$ and $\Wscrhat\subplus = \{1,2\}$.

Now consider finding an optimal working set at $\xstar$.
As shown in the proof, constraints
$1$ and $2$ must be active at $\xstar$ (and
indeed they are), but constraints
$5$ and $6$ are not.  The active set at $\xstar$ is
$\Ascr(\xstar) = \{1,2,3,4\}$
and $\Workinghat\xstar \ne b\subwscrhat$,
so that
$\Wscrhat$ is not an optimal working set for $\xstar$,
even though $\lamhat$ is an optimal multiplier.

Constraints $1$ and $2$ must be part of the working set at $\xstar$.
Since $\rank(\Aact(\xstar)) = 3$, we need to add one further
constraint which is active at $\xstar$ to $\Wscrhat\subplus$.  For
this example, the extra constraint can be taken as constraint $3$ or
$4$.  In either case, the optimal multiplier is the same,
$\lamstar=\lamhat$, and the linear system $\Working \xstar =
b\subwscr$ is satisfied.

Note that if we seek a working set at a non-vertex optimal point,
such as $\xtilde = (\frac{5}{2}, \frac{3}{4}, 1)^T$ in
example (\ref{eqn-workexamp}), then 
$\Wscrtilde = \Wscrhat\subplus$ is an optimal working set
at $\xtilde$. In fact, $\Wscrhat\subplus$ is an optimal working set at
any optimal point.
 
\section{A proof of Farkas' lemma}
\label{sec-farkas}

For completeness, we state and prove a common form of Farkas' lemma
using the results in this paper. Note that in Farkas' lemma, no
requirement is imposed about the rank of the matrix involved.

\begin{lemma}[Farkas' lemma.]
\label{lem-farkas}
Given an $m\times n$ matrix $A$ and
an $n$-vector $c$,
precisely one of the following two conditions must be true:
\begin{enumerate}
\item[(1)] There exists $y\ge 0$ such that
$A\T y = c$;
\item[(2)] There exists $p$ such that
$A p\ge 0$ and $c^T p < 0$.
\end{enumerate}
\end{lemma}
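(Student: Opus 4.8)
The plan is to establish the dichotomy in two pieces: first that conditions (1) and (2) cannot both hold, and second that at least one of them must hold. The mutual exclusivity is the easy part. If $y\ge 0$ satisfies $A^T y = c$ and $p$ satisfies $Ap\ge 0$ and $c^T p < 0$, then $c^T p = (A^T y)^T p = y^T (Ap) \ge 0$, since $y\ge 0$ and $Ap\ge 0$. This contradicts $c^T p < 0$, so the two conditions are incompatible. This argument is essentially the content of Proposition~\ref{prop-lowerbound} specialized to the all-inequality case with $\meq = 0$ and $b = 0$.

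For the main direction, I would show that if condition (2) fails, then condition (1) holds, by reducing to the optimality machinery already developed. Consider the all-inequality linear program of minimizing $c^T x$ subject to $Ax \ge 0$. The origin $x = 0$ is feasible, and negation of condition (2) says precisely that there is no $p$ with $Ap \ge 0$ and $c^T p < 0$; that is, no feasible direction at $x = 0$ is a descent direction. By Lemma~\ref{lem-necsuff-one}, this makes $x = 0$ an optimal point. The objective is therefore bounded below in the feasible region (by $c^T \cdot 0 = 0$), so alternative (ii) of Lemma~\ref{lem-optvert} cannot hold, confirming boundedness directly.

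The remaining obstacle is the rank condition: Proposition~\ref{prop-atlast}, which would deliver a multiplier $\lambda$ with $A^T\lambda = c$ and $\lambda \ge 0$, requires $\rank(A) = n$, whereas Farkas' lemma imposes no rank assumption. The fix is a standard augmentation. If $\rank(A) < n$, I would append rows to $A$ to restore full column rank in a way that does not affect either alternative. Concretely, form $\widetilde A = \bigl(\begin{smallmatrix} A \\ B \\ -B \end{smallmatrix}\bigr)$, where the rows of $B$ span a complement of the row space of $A$, so that $\widetilde A$ has rank $n$; the constraints $Bx \ge 0$ and $-Bx \ge 0$ together force $Bx = 0$, adding no genuine restriction beyond the row space already present. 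Applying the bounded-below, full-rank version of the optimality conditions to the augmented program yields a nonnegative multiplier $\widetilde\lambda \ge 0$ with $\widetilde A^T \widetilde\lambda = c$; collapsing the multipliers for the $\pm B$ rows back to a single (sign-unrestricted) multiplier on $B$ and observing that the combination lies in the row space of $A$ produces the desired $y \ge 0$ with $A^T y = c$. The care needed in this augmentation—ensuring the added constraints leave the feasible directions at the origin unchanged so that failure of (2) is preserved—is where I expect the only real subtlety to lie.
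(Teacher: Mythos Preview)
Your mutual-exclusivity argument is fine and matches the paper's. Your main direction is a valid alternative to the paper's, but the crucial step is not the one you flag, and your sketch leaves that step unaddressed.

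The paper augments differently: it takes $\widetilde A = \bigl(\begin{smallmatrix} A \\ I_n \\ -I_n \end{smallmatrix}\bigr)$ with right-hand side $\bigl(\begin{smallmatrix} 0 \\ -e \\ -e \end{smallmatrix}\bigr)$, so the feasible region is the box $-e \le p \le e$ intersected with $Ap \ge 0$. This gives rank $n$, consistency, and a bounded objective all at once, without assuming anything about (2) in advance. It then applies Proposition~\ref{prop-atlast} and case-splits on the optimal value. If $c^T p^\star < 0$, the optimizer witnesses (2). If $c^T p^\star = 0$, the identity $0 = c^T p^\star = \lambda^T b = -e^T(\lambda_2 + \lambda_3)$ together with $\lambda_2,\lambda_3 \ge 0$ forces $\lambda_2 = \lambda_3 = 0$, whence $A^T\lambda_1 = c$ with $\lambda_1 \ge 0$ gives (1). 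The nonzero shift by $-e$ is what makes the optimal-value formula kill the extra multipliers cleanly.

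Your route (assume (2) fails, augment with $\pm B$ for a row-space complement $B$, right-hand side zero) also works, but the subtlety is misplaced. Preserving optimality of the origin under the augmentation is trivial: the augmented feasible set $\{x : Ax \ge 0,\; Bx = 0\}$ is a subset of $\{x : Ax \ge 0\}$, so optimality there is inherited. The genuine issue is the last step. After Proposition~\ref{prop-atlast} gives you $\widetilde\lambda \ge 0$ with $A^T\lambda_1 + B^T(\lambda_2 - \lambda_3) = c$, you need $B^T(\lambda_2 - \lambda_3) = 0$, and with right-hand side zero every constraint is active at the origin, so complementarity tells you nothing. The missing observation is that failure of (2) already forces $c$ into the row space of $A$: otherwise some $p$ with $Ap = 0$ has $c^T p \ne 0$, and one of $\pm p$ witnesses (2). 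Since the row spaces of $A$ and $B$ meet only in $0$, the decomposition $c = A^T\lambda_1 + B^T\mu$ then forces $B^T\mu = 0$, and $y = \lambda_1$ works. This is easy once stated, but it is the step that carries your argument; your phrase ``observing that the combination lies in the row space of $A$'' is exactly this claim, left unjustified.
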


\begin{proof}
If $y$ satisfies (1) and $p$ satisfies (2) then $c^T p = y^T A p$. Because
$A p\ge 0$ and $y\ge 0$, it follows that $y\T A p \ge 0$, which
contradicts the relation $c\T p<0$ in (2).  Hence (1) and (2) cannot both
be true.

To show that one of (1) or (2) must
be true, we consider the all-inequality linear program
\begin{equation}
\label{eqn-farkaslp}
\minimize{p\in\Re^n}\;\; c\T p 
\quad\hbox{subject to}\quad \Atilde p \ge b, \quad\hbox{with}\quad
\Atilde = \mtx{l}{\m A\\
            \m I_n\\
            -I_n}
\;\;\hbox{and}\;\;
b = \mtx{r}{0\\
            -e\\
            -e},
\end{equation}
where $e$ denotes $(1,1,\dots, 1)^T$.  The first $m$ constraints
are $Ap\ge 0$ and the last $2n$ constraints are equivalent
to requiring
that $-1 \le p_i \le 1$ for $i=1$, \dots, $n$.  

This LP has the following properties: (i) $\Atilde$ 
has rank $n$ because of the presence of the two identity matrices,
(ii) the constraints $\Atilde p\ge b$ are consistent because $p = 0$
is feasible, 
and (iii) the feasible region is bounded so the objective function is
bounded below. Let $\pstar$ denote an optimal solution of
(\ref{eqn-farkaslp}).

Proposition~\ref{prop-atlast} implies that there exists
a nonnegative optimal multiplier $\lambda$, which we may partition as
$\lambda=(\lambda_1,\lambda_2,\lambda_3)^T$, where $\lambda_1$ is an
$m$-vector and $\lambda_2$ and $\lambda_3$ are $n$-vectors.
Because $\lambda$ is an optimal multiplier,
we know that $\Atilde^T \lambda = c$ and
$c^T\pstar = \lambda^T b$.  Writing out these relations in
partitioned form gives
\begin{equation}\label{eqn-farkasopt} \\
\Atilde^T \lambda = A^T \lambda_1 + \lambda_2 - \lambda_3  = c
\;\;\hbox{with}\;\; \lambda_i\ge 0, 
\quad\hbox{and}\quad
  c^T \pstar = \lambda^T b = -e\T (\lambda_2 + \lambda_3).
\end{equation}
Since $c^T p = 0$ at the feasible point $p=0$, the optimal
objective value
$c^T\pstar$ must be either zero or negative.
If $c^T \pstar = 0$, the second relation in (\ref{eqn-farkasopt})
implies that $-e\T (\lambda_2 + \lambda_3)=0$. Since $\lambda_2$ and $\lambda_3$ are
both nonnegative, it follows that $\lambda_2=\lambda_3=0$. Consequently,
the first relation in (\ref{eqn-farkasopt}) shows that
$A^T \lambda_1=c$, $\lambda_1\ge 0$, which means that
that case (1) of the lemma holds for $y=\lambda_1$.
If $c^T \pstar < 0$, then, since $A\pstar \ge 0$, $\pstar$ satisfies
relation (2) of the lemma. Consequently, exactly one of (1) and (2) has a solution.
\end{proof}

\section{Summary}
\label{sec-summary}
Assume that the constraints $\Aeq x = \beq$
and $\Aineq x \ge \bineq$ are consistent,
$\rank(A) = n$, 
and $c^T x$ is bounded below in the feasible region.
We have shown that the feasible point $\xstar$ is
an optimal solution
if and only if there exists an optimal multiplier
$\lamstar$ such that (i) $A^T\lamstar = c$,
(ii) $\lamstarT(A\xstar -b) = 0$,
and (iii) $\lamstar\subiscr \ge 0$.
These conditions were derived through elementary
proofs of the following sequence of results:
\begin{enumerate}
\item[(a)] If
$\lamstar$ exists satisfying (i), (ii), and (iii),
$\xstar$ is optimal. (Proposition~\ref{prop-suffone}.)
\item[(b)] Let $\xstar$ be an optimal point with an
associated multiplier $\lamstar$ satisfying
(i), (ii), and (iii).  For any other optimal point
$\xtilde\ne\xstar$, condition (ii) is satisfied with
$\lamstar$ and
$\xtilde$, i.e., $\lamstarT(A\xtilde - b) = 0$,
and $c^T \xtilde = c^T\xstar$.
(Proposition~\ref{prop-specialprops}.)
\item[(c)] There always exists an optimal vertex $\xhat$
and an associated multiplier $\lamhat$ satisfying
(i), (ii), and (iii).
(Propositions~\ref{prop-nondegenopt} and
\ref{prop-findoptworking}.)
\item[(d)] There must be an optimal multiplier
corresponding to any optimal solution.
(Proposition~\ref{prop-atlast}.)
\item[(e)] There must be a nonsingular optimal working set corresponding
to any optimal vertex. (Proposition~\ref{prop-optworking}.)
\end{enumerate}


\begin{thebibliography}{99}

\bibitem{Bartl07}
Bartl, D. (2007).
Farkas' Lemma, other theorems of the alternative, and
linear programming in infinite-dimensional spaces: a purely
linear-algebraic approach,
{\sl Linear and Multilinear Algebra\/} 55,
327--353.


\bibitem{BT}
Bertsimas, D.\ and Tsitsiklis, J.\ N. (1997). 
{\sl Introduction to Linear Optimization},
Athena Scientific, Belmont, Massachusetts.

\bibitem{Broyden}
Broyden, C.\ G. (1998).
A simple algebraic proof of Farkas's Lemma and
related theorems,
{\sl Optimization Methods and Software\/} 8, 185--199.

\bibitem{Charnes}
Charnes, A. (1952).
Optimality and degeneracy in linear programming,
{\sl Econometrica\/} 20, 160--170.

\bibitem{Chvatal}
Chv\'{a}tal, V. (1983).  {\sl Linear Programming},
W.\ H.\ Freeman, New York.

\bibitem{Dantzig}
Dantzig, G.\ B. (1963).
{\sl Linear Programming and Extensions},
Princeton University Press, Princeton, New Jersey.

\bibitem{Dax}
Dax, A. (1997).
 An elementary proof of Farkas' lemma,
{\sl SIAM Review\/} 39, 503--507.

\bibitem{FMW}
Ferris, M.\ C., Mangasarian, O.\ L., and
Wright, S.\ J. (2007).
{\sl Linear Programming with Matlab}, Society for Industrial 
and Applied Mathematics, Philadelphia, PA.

\bibitem{Fletcher}
Fletcher, R. (1987).
{\sl Practical Methods of Optimization} (second edition),
John Wiley \& Sons, Chichester, United Kingdom.


\bibitem{Forsgren}
Forsgren, A. (2008).
An elementary proof of optimality conditions
for linear programming, Report TRITA-MAT-2008-O86, Department of
Mathematics, KTH Royal Institute of Technology, Sweden.

\bibitem{GMW}
Gill, P.\ E.\, Murray, W., and Wright, M.\ H. (1991).
{\sl Numerical Linear Algebra and Optimization, Volume 1},
Addison--Wesley, Redwood City, California.


\bibitem{NemTodd}
Nemirovski, A.\ S.\ and M.\ J.\ Todd (2008).
Interior-point methods for optimization,
{\sl Acta Numerica\/} 17, 191--234.

\bibitem{NocWright}
Nocedal, J.\ and Wright, S.\ J. (2006).
{\sl Numerical Optimization} (second edition),
Springer, New York.

\bibitem{RoosTerlaky}
Roos, C.\ and Terlaky, T. (1999).
Note on a paper of Broyden,
{\sl Operations Research Letters\/} 15, 183--186.

\bibitem{Schrijver} 
Schrijver, A. (1987).
{\sl Theory of Linear and Integer Programming},
John Wiley \& Sons, New York, New York.

\bibitem{Svanberg}
Svanberg, K. (2008).
Farkas' lemma derived by elementary linear algebra,
Report TRITA-MAT-2008-OS7,
Department of Mathematics, KTH Royal Institute of Technology, Sweden. 

\bibitem{Vanderbei}
Vanderbei, R.\ J. (1996).
{\sl Linear Programming: Foundations and Extensions},
Kluwer Academic Publishers, Boston, Massachusetts.

\bibitem{MHWint}
Wright, M.\ H. (1992).
Interior methods for constrained optimization,
{\sl Acta Numerica\/} 1, 341--407.

\end{thebibliography}
\end{document}